\begin{document}
\newtheorem{lem}{Lemma}[section]
\newtheorem{prop}{Proposition}[section]
\newtheorem{cor}{Corollary}[section]
\numberwithin{equation}{section}
\newtheorem{thm}{Theorem}[section]

\theoremstyle{remark}
\newtheorem{example}{Example}[section]
\newtheorem*{ack}{Acknowledgments}

\theoremstyle{definition}
\newtheorem{definition}{Definition}[section]

\theoremstyle{remark}
\newtheorem*{notation}{Notation}
\theoremstyle{remark}
\newtheorem{remark}{Remark}[section]

\newenvironment{Abstract}
{\begin{center}\textbf{\footnotesize{Abstract}}%
\end{center} \begin{quote}\begin{footnotesize}}
{\end{footnotesize}\end{quote}\bigskip}
\newenvironment{nome}

{\begin{center}\textbf{{}}%
\end{center} \begin{quote}\end{quote}\bigskip}

\newcommand{\triple}[1]{{|\!|\!|#1|\!|\!|}}

\newcommand{\xx}{\langle x\rangle}
\newcommand{\ep}{\varepsilon}
\newcommand{\al}{\alpha}
\newcommand{\be}{\beta}
\newcommand{\de}{\partial}
\newcommand{\la}{\lambda}
\newcommand{\La}{\Lambda}
\newcommand{\ga}{\gamma}
\newcommand{\del}{\delta}
\newcommand{\Del}{\Delta}
\newcommand{\sig}{\sigma}
\newcommand{\ome}{\omega}
\newcommand{\Ome}{\Omega}
\newcommand{\C}{{\mathbb C}}
\newcommand{\N}{{\mathbb N}}
\newcommand{\Z}{{\mathbb Z}}
\newcommand{\R}{{\mathbb R}}
\newcommand{\T}{{\mathbb T}}
\newcommand{\Rn}{{\mathbb R}^{n}}
\newcommand{\Rnu}{{\mathbb R}^{n+1}_{+}}
\newcommand{\Cn}{{\mathbb C}^{n}}
\newcommand{\spt}{\,\mathrm{supp}\,}
\newcommand{\Lin}{\mathcal{L}}
\newcommand{\SSS}{\mathcal{S}}
\newcommand{\F}{\mathcal{F}}
\newcommand{\xxi}{\langle\xi\rangle}
\newcommand{\eei}{\langle\eta\rangle}
\newcommand{\xei}{\langle\xi-\eta\rangle}
\newcommand{\yy}{\langle y\rangle}
\newcommand{\dint}{\int\!\!\int}
\newcommand{\hatp}{\widehat\psi}
\renewcommand{\Re}{\;\mathrm{Re}\;}
\renewcommand{\Im}{\;\mathrm{Im}\;}
\def\11{{\rm 1~\hspace{-1.4ex}l} }

\title
[An Improvement on the  Br\'ezis-Gallouet technique ]
{An Improvement on the Br\'ezis-Gallou\"et technique
for 2D NLS and 1D half-wave equation}
\author[Tohru Ozawa]{Tohru~Ozawa}
\author[Nicola Visciglia]{Nicola Visciglia}
\address{Department of Applied Physics, Waseda University, Tokyo 169-8555, Japan}\email{txozawa@waseda.jp}
\address{Dipartimento di Matematica Universit\`a di Pisa ,
Largo B. Pontecorvo 5, 56127 Pisa. Italy}\email{ viscigli@dm.unipi.it}
\maketitle
\begin{abstract}
We revise the classical approach by Br\'ezis-Gallou\"et 
to prove global well posedness for nonlinear evolution equations. 
In particular we prove global well--posedness 
for the quartic NLS posed on general domains $\Omega$ in $\R^2$ with initial data in $H^2(\Omega)\cap H^1_0(\Omega)$, and for the quartic nonlinear half-wave equation on $\R$ with initial data in $H^1(\R)$.
\end{abstract}
The main aim of this paper is to revise the technique developed
by Br\'ezis-Gallou\"et to study the global well--posedness 
of Cauchy problems associated with some nonlinear evolution equations. 
We prove that by the Br\'ezis-Gallou\"et technique applied to higher order energy with integration by parts, the standard theory developed in \cite{BG}
and \cite{KLR} for NLS and half-wave equation with cubic nonlinearity, has an improvement to quartic nonlinearity.

Our first result
concerns an extension
to higher order nonlinearities of the very classical result in \cite{BG}.
More precisely the first family of problems that we shall address is the following
one:
\begin{equation}\label{NLS}
\begin{cases}
i\partial_t u + \Delta u=\lambda u|u|^{3}, (t, x)\in \R\times \Omega,
\\
u(t, x)=0, (t, x)\in \R \times \partial \Omega,
\\
u(0)=\varphi,
\end{cases}
\end{equation}
where $\lambda=\pm 1$, $\Omega\subset \R^2$ is open and satisfies the following 
hypothesis:
$$\leqno (H1) \hbox{ } \exists \hbox{
$L\in {\mathcal L}(H^2(\Omega), H^2(\R^2)) \cap {\mathcal L}(H^1(\Omega), H^1(\R^2))$
s.t. $(Lu)_{|\Omega}=u$ a.e. in $\Omega$};$$
$$ \leqno(H2) \hbox{ } L^2(\Omega)\supset H^2(\Omega)\cap H^1_0(\Omega)\ni u\mapsto \Delta u \in L^2(\Omega) \hbox{ is self-adjoint }.$$
By the celebrate Br\'ezis-Gallou\"et inequality it follows that if $\Omega$ satisfies $(H1)$,
then
the following logarithmic Sobolev embedding occurs:
\begin{equation}\label{improved}
\|v\|_{L^\infty(\Omega)} \lesssim \|v\|_{H^1(\Omega)} \sqrt {\ln \big (2+ \|v\|_{H^2(\Omega)}\big )} + 1, \forall v\in H^2(\Omega).
\end{equation}

There has been a growing interest in the last decades on the Cauchy problem
associated with NLS on domains, starting from the pioneering paper \cite{BG}.
In this paper the authors can deduce global well--posedness 
for the defocusing cubic NLS on domains $\Omega \subset \R^2$,
by combining \eqref{improved}
with the conservation of the energy.
A first extension of the result by Br\'ezis-Gallou\"et,
up to the fourth order  nonlinearity, was obtained in \cite{T} under some restrictive
conditions on the initial data $\varphi$. More precisely it is assumed $\varphi|\varphi|\in H^3(\Omega)\cap H^1_0(\Omega), \Delta \varphi \in H^1_0(\Omega)$. 
A fundamental tool to treat NLS on domains, with higher order nonlinearities,
are the so called Strichartz inequalities
(see \cite{Ca} and the bibliography therein for the case $\Omega=\R^2$).
In \cite{BGT} it is proved a suitable version of Strichartz inequalities
with loss, on general compact manifolds. Beside other results in this paper it is studied the
Cauchy problem associated with  NLS on 2D compact manifolds for every nonlinearity  $u|u|^p$. 
The results in \cite{BGT} have been extended to NLS 
on domains $\Omega\subset \R^2$, under suitable assumptions. In particular the case of bounded domains and external domains
has been widely investigated in the literature. 
Just to quote a few of those results
we mention
\cite{A}, \cite{BBS}, \cite{BGT2}, \cite{I}.... 

Due to the huge literature devoted to NLS on 2D domains, Theorem \ref{nls} below could be considered 
somewhat weaker compared with
the known results, however
we prefer to keep its statement along this paper for three reasons. First of all our argument
is exclusively based on integration by parts and energy estimates, and hence it is independent on the use of Strichartz estimates. The second reason is that
the proof of Theorem \ref{nls} can help to understand the idea
behind the more involved
proof of our second result
concerning the nonlinear half--wave equation, where as far as we know
our result is a novelty in the literature.
The third reason is that as far as we know it is unclear whether or not the aforementioned Strichartz estimates are available under the rather general assumptions
$(H1), (H2)$.
\\ 

Let us recall that by the usual energy estimates, in conjunction
with the classical Sobolev embedding $H^2(\Omega)\hookrightarrow L^\infty(\Omega)$,
one can prove that the Cauchy problem \eqref{NLS} is well posed locally in time
provided that $\varphi\in H^2(\Omega) \cap H^1_0(\Omega)$.
More precisely there exists one unique solution $u\in {\mathcal C}([0, T_{max}); H^2(\Omega)\cap H^1_0 (\Omega))$
of \eqref{NLS},  where $T_{max}>0$. Moreover we have the alternative: 
either $T_{max}=\infty$ or $T_{max}<\infty$
and $\lim_{t\rightarrow T_{max}^-} \|u(t)\|_{H^2(\Omega)}=\infty$.\\

The first result of the paper is the following.
\begin{thm}\label{nls} Let  
$\Omega\subset \R^2$ be an open set that satisfies $(H1), (H2)$, 
$\varphi\in H^2(\Omega) \cap H^1_0(\Omega)$ and let
$u\in {\mathcal C}([0, T_{max}); H^2(\Omega) \cap H^1_0(\Omega))$ 
be the unique local solution of \eqref{NLS}. Then we have the following alternative:
either $T_{max}=\infty$ or  $T_{max}<\infty$ and $\sup_{[0, T_{max})} \|u(t)\|_{H^1(\Omega)}=\infty$.
\end{thm}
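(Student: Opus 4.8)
The plan is to prove the contrapositive: assuming
$M:=\sup_{[0,T_{max})}\|u(t)\|_{H^1(\Omega)}<\infty$, I will show that $\|u(t)\|_{H^2(\Omega)}$ stays bounded on every finite subinterval of $[0,T_{max})$, so that the blow-up alternative recalled before the statement forces $T_{max}=\infty$. By hypothesis $(H2)$ the Dirichlet Laplacian is self-adjoint on $H^2(\Omega)\cap H^1_0(\Omega)$, whence $\|u\|_{H^2(\Omega)}\simeq\|u\|_{L^2(\Omega)}+\|\Delta u\|_{L^2(\Omega)}$; since $\|u\|_{L^2(\Omega)}$ is conserved, it suffices to control $\|\Delta u\|_{L^2(\Omega)}$ along the flow.

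First I would differentiate the higher-order energy. Writing $\partial_t u=i\Delta u-i\lambda u|u|^3$ and using that $\langle\Delta^2u,\Delta u\rangle$ is real --- this is exactly where self-adjointness $(H2)$ and the boundary condition $u\in H^1_0(\Omega)$ enter, making every boundary contribution vanish --- the purely linear part cancels and one is left with
\begin{equation}\label{dt}
\frac{d}{dt}\|\Delta u\|_{L^2(\Omega)}^2=2\lambda\,\Im\langle\Delta(u|u|^3),\Delta u\rangle .
\end{equation}
Expanding $\Delta(u|u|^3)$ by the Leibniz/Wirtinger rule produces three kinds of terms: a diagonal term $\tfrac52|u|^3|\Delta u|^2$, which is real and thus killed by $\Im$; first-order product terms of schematic type $|u|^2(\nabla u)^2$, whose pairing with $\Delta u$ is bounded via $\|\nabla u\|_{L^4(\Omega)}^2\lesssim\|u\|_{H^1(\Omega)}\|u\|_{H^2(\Omega)}$ (transplanted from $\R^2$ through the extension operator $L$ of $(H1)$) by $\|u\|_{L^\infty(\Omega)}^2\|u\|_{H^1(\Omega)}\|\Delta u\|_{L^2(\Omega)}^2$ up to lower order; and the genuinely dangerous term $\tfrac32\int_\Omega|u|u^2(\overline{\Delta u})^2$, whose naive estimate is only $\|u\|_{L^\infty(\Omega)}^3\|\Delta u\|_{L^2(\Omega)}^2$.

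The main obstacle is precisely this last term. By \eqref{improved} one has $\|u\|_{L^\infty(\Omega)}^3\lesssim 1+M^3(\ln(2+\|\Delta u\|_{L^2(\Omega)}))^{3/2}$, a power $3/2$ of the logarithm, which is too large for a logarithmic Gronwall argument to close (the classical cubic problem only generates $\|u\|_{L^\infty(\Omega)}^2\sim\ln$). A direct integration by parts in this term is unhelpful, since it reproduces itself; instead I would substitute the equation in the form $\overline{\Delta u}=i\partial_t\bar u+\lambda\bar u|u|^3$ into one factor and then integrate by parts, which is legitimate because $|u|u^2$ vanishes on $\partial\Omega$. This rewrites the dangerous term as a total time derivative $-c\lambda\,\frac{d}{dt}\Re\int_\Omega|u|u^2(\nabla\bar u)\cdot(\nabla\bar u)$, for a suitable constant $c$, plus remainders controlled either by $\|u\|_{L^\infty(\Omega)}^2\|u\|_{H^1(\Omega)}\|\Delta u\|_{L^2(\Omega)}^2$ or by the lower-order quantity $\|u\|_{L^\infty(\Omega)}^6\|\nabla u\|_{L^2(\Omega)}^2$. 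Absorbing the total derivative, I work with the modified energy $\mathcal E(t):=\|\Delta u\|_{L^2(\Omega)}^2+c\lambda\,\Re\int_\Omega|u|u^2(\nabla\bar u)\cdot(\nabla\bar u)$; the correction is $O(\|u\|_{L^\infty(\Omega)}^3\|\nabla u\|_{L^2(\Omega)}^2)=O((\ln(2+\|\Delta u\|_{L^2(\Omega)}))^{3/2})$, hence $\mathcal E\simeq\|\Delta u\|_{L^2(\Omega)}^2$ for large values, so that controlling $\mathcal E$ controls $\|u\|_{H^2(\Omega)}$.

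Collecting the estimates and inserting \eqref{improved} with $\|u\|_{H^1(\Omega)}\le M$, i.e. $\|u\|_{L^\infty(\Omega)}^2\lesssim 1+M^2\ln(2+\mathcal E)$, I obtain a differential inequality (with constants depending on $M$)
\begin{equation}\label{gron}
\frac{d}{dt}\mathcal E\lesssim (1+\mathcal E)\ln(2+\mathcal E)+(\ln(2+\mathcal E))^3 ,
\end{equation}
whose right-hand side is $O(\mathcal E\ln\mathcal E)$ for large $\mathcal E$; a logarithmic Gronwall argument then keeps $\mathcal E(t)$, hence $\|u(t)\|_{H^2(\Omega)}$, finite on every finite subinterval. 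This contradicts the blow-up alternative unless $T_{max}=\infty$, proving the theorem. Beyond the bookkeeping in expanding $\Delta(u|u|^3)$, the delicate points I expect are the rigorous justification of the integrations by parts on a general domain (where only $(H1)$, $(H2)$ and $u\in H^1_0(\Omega)$ are available, including an approximation to cope with the limited smoothness of $u|u|^3$) and the verification that the remainder $\|u\|_{L^\infty(\Omega)}^6\|\nabla u\|_{L^2(\Omega)}^2$, though of logarithmic size, stays subcritical relative to the $\mathcal E\ln\mathcal E$ term in \eqref{gron}.
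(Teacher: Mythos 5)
Your proposal is correct and is essentially the paper's own argument: both isolate the fact that the only genuinely dangerous contribution to $\frac{d}{dt}\|\Delta u\|_{L^2}^2$ is the one whose naive bound is $\|u\|_{L^\infty}^3\|\Delta u\|_{L^2}^2\sim\ln^{3/2}$, convert it into a total time derivative by substituting the equation and integrating by parts, absorb that derivative into a modified energy comparable to $\|\Delta u\|_{L^2}^2$ for large norms, and close with \eqref{improved}, the Gagliardo--Nirenberg bound \eqref{GNL4}, the diamagnetic inequality $|\partial_t|u||\le|\partial_t u|$, and a logarithmic Gronwall argument, exactly as in the paper's Lemmas 1.1--1.2 and the concluding contradiction. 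The only difference is bookkeeping: the paper's correction terms $-2\lambda\Re(\Delta u,u|u|^3)-\frac34\lambda(\nabla|u|^2,\nabla(|u|^2)|u|)$ and your correction $c\lambda\Re\int_\Omega|u|u^2(\nabla\bar u)\cdot(\nabla\bar u)\,dx$ differ, after integration by parts, only by a multiple of $\int_\Omega|u|^3|\nabla u|^2\,dx$, whose time derivative is controlled by the same remainder estimates, so the two modified energies are interchangeable.
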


Next we give some concrete conditions on the initial data $\varphi$ in order to guarantee
global well--posedness of \eqref{nls}.
We need to introduce the energy preserved
along \eqref{nls} for $\lambda=\pm 1$:
\begin{equation}\label{enNLS}E_{NLS, \pm}(u)=\frac 12 \int_{\Omega} |\nabla u|^2 
dx \pm \frac 1 5 \int_{\Omega} |u|^5 dx.\end{equation}
We also introduce the ground state $Q(|x|)$ defined as the unique solution
to
$$-\Delta Q+Q=Q^4, \quad  Q\in H^1(\R^2), \quad Q>0.$$

We are now in a position to state the following global well--posedness result.
\begin{cor}\label{gwp}
Let $\Omega$ be as in Theorem \ref{nls} and $\varphi\in H^2(\Omega) \cap H^1_0(\Omega)$.\\
If $\lambda=1$ then \eqref{NLS} has one unique
global solution $u\in {\mathcal C}([0, \infty); H^2(\Omega) \cap H^1_0(\Omega))$.\\
If $\lambda=-1$ and $\varphi$ satisfies: 
\begin{align}\label{deu}E_{NLS,-} (\varphi)  \|\varphi\|_{L^2}^4 < E_{NLS, -}(Q) 
\|Q\|_{L^2}^4\\\nonumber
\hbox{ and } \|\nabla  \varphi\|_{L^2}  \|\varphi\|_{L^2(\Omega)}^2 < \|\nabla  Q\|_{L^2} 
\|Q\|_{L^2}^2,\end{align}
then \eqref{nls} has one unique
global solution $u\in {\mathcal C}([0, \infty); H^2(\Omega) \cap H^1_0(\Omega))$.
\end{cor}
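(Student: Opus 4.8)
The plan is to obtain Corollary \ref{gwp} from the blow-up alternative of Theorem \ref{nls}. Since that alternative is phrased solely in terms of the $H^1(\Omega)$ norm, it suffices to establish an a priori bound $\sup_{[0,T_{max})}\|u(t)\|_{H^1(\Omega)}<\infty$; this forces $T_{max}=\infty$, and uniqueness is supplied by the local theory already quoted. I will use the two conserved quantities along \eqref{NLS}: the mass $M=\|u(t)\|_{L^2(\Omega)}^2$ and the energy $E_{NLS,\pm}(u(t))=E_{NLS,\pm}(\varphi)$ from \eqref{enNLS}. For $\lambda=1$ the bound is immediate: the defocusing energy satisfies $E_{NLS,+}(u)\ge\tfrac12\|\nabla u\|_{L^2}^2$, so conservation gives $\|\nabla u(t)\|_{L^2}^2\le 2E_{NLS,+}(\varphi)$, and together with mass conservation this bounds $\|u(t)\|_{H^1(\Omega)}$ uniformly, settling this case.

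The focusing case $\lambda=-1$ rests on the sharp Gagliardo--Nirenberg inequality
\[
\|v\|_{L^5(\R^2)}^5\le C_{GN}\,\|\nabla v\|_{L^2(\R^2)}^3\,\|v\|_{L^2(\R^2)}^2,\qquad v\in H^1(\R^2),
\]
whose optimizer is precisely the ground state $Q$. First I transfer this to $\Omega$: any $u\in H^1_0(\Omega)$ extends by zero to an element of $H^1(\R^2)$ with identical $L^2$, $L^5$ and gradient norms, so the same inequality, with the same constant $C_{GN}$, holds for $H^1_0(\Omega)$ functions. Next, the Nehari and Pohozaev identities for $-\Delta Q+Q=Q^4$ give $\|\nabla Q\|_{L^2}^2=\tfrac32\|Q\|_{L^2}^2$ and $\|Q\|_{L^5}^5=\tfrac52\|Q\|_{L^2}^2$, whence $E_{NLS,-}(Q)=\tfrac14\|Q\|_{L^2}^2$ and an explicit value of $C_{GN}$ in terms of $\|Q\|_{L^2}$.

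Introduce the scale-invariant gradient quantity $z(t)=\|\nabla u(t)\|_{L^2}\,\|u(t)\|_{L^2}^2=M\,\|\nabla u(t)\|_{L^2}$. Multiplying the identity $E_{NLS,-}(u)=\tfrac12\|\nabla u\|_{L^2}^2-\tfrac15\|u\|_{L^5}^5$ by $M^2$ and inserting the sharp Gagliardo--Nirenberg bound yields
\[
c:=E_{NLS,-}(\varphi)\,M^2\;\ge\;h(z(t)),\qquad h(z):=\tfrac12 z^2-\tfrac{C_{GN}}{5} z^3 .
\]
The function $h$ increases on $[0,z^\ast]$ and decreases afterward, with maximum at $z^\ast=\tfrac{5}{3C_{GN}}$; a direct computation using the identities above shows $z^\ast=\|\nabla Q\|_{L^2}\|Q\|_{L^2}^2$ and $h(z^\ast)=E_{NLS,-}(Q)\|Q\|_{L^2}^4$. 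Hence the two hypotheses \eqref{deu} read exactly $c<h(z^\ast)$ and $z(0)<z^\ast$. I then run the standard trapping argument: for such a subcritical level the sublevel set $\{z\ge0:\,h(z)\le c\}$ splits as $[0,z_-]\cup[z_+,\infty)$ with $z_-<z^\ast<z_+$, separated by a gap containing $z^\ast$; since $z(0)<z^\ast$ and $h(z(0))\le c$, the point $z(0)$ lies in the lower component. As $t\mapsto z(t)$ is continuous and obeys $h(z(t))\le c$ throughout, it cannot cross the gap, so $z(t)\le z_-<z^\ast$ for all $t$. With $M$ fixed this yields a uniform bound on $\|\nabla u(t)\|_{L^2}$, hence on $\|u(t)\|_{H^1(\Omega)}$, and Theorem \ref{nls} gives $T_{max}=\infty$.

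The heavy analytic lifting is done by Theorem \ref{nls}, so the remaining difficulty is essentially bookkeeping: one must match the two scale-invariant conditions in \eqref{deu} to the geometry of $h$, which requires the precise value of the sharp constant $C_{GN}$ together with the ground-state identities, and — most importantly — the transfer of the sharp inequality from $\R^2$ to $H^1_0(\Omega)$ by zero extension, so that the ground-state threshold remains the correct one on a general domain $\Omega$ satisfying only $(H1)$, $(H2)$.
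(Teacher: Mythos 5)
Your proof is correct and follows essentially the same route as the paper: the paper reduces Corollary \ref{gwp} to Theorem \ref{nls} plus conservation of mass and energy, treating the focusing case by exactly the continuity/trapping argument you spell out, which it attributes to \cite{HR} and carries out in detail for the half-wave analogue (Corollary \ref{gwpHW}) with the same cubic function $f(x)=\tfrac12 x^2-\tfrac15 C_{GN}^5x^3$ and sublevel-set splitting. The details you supply --- the zero-extension transferring the sharp Gagliardo--Nirenberg inequality to $H^1_0(\Omega)$ and the Nehari--Pohozaev identities pinning down the threshold $\|\nabla Q\|_{L^2}\|Q\|_{L^2}^2$ --- are precisely what the paper delegates to the cited reference.
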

The proof of  Corollary \ref{gwp} follows by Theorem \ref{nls}
in conjunction with the conservation of the energy \eqref{enNLS}.
In fact in the defocusing case, since the energy is positive definite,
it prevents blow--up of the $H^1$ norm. In the focusing case
a combination of the conservation of the energy with 
conditions \eqref{deu}, prevents blow--up of the $H^1$-norm 
via a standard continuity argument (see 
\cite{HR} for details). 
\\

The second family of Cauchy problems that we consider in this paper is associated
with the fourth order
nonlinear half-wave equation:
\begin{equation}\label{NLSsemirel}
\begin{cases}
i\partial_t u -  |D_x| u=\lambda u|u|^{3}, (t, x)\in \R\times \R,\\
u(0)=\varphi \in H^1(\R),
\end{cases}
\end{equation}
where $|D_x|=\sqrt{-\partial_x^2}$ is the first order non-local fractional derivative,
$\lambda=\pm 1$. Let us mention that evolution problems with nonlocal dispersion arise in various physical settings
(see \cite{ES}, \cite{MMT}, \cite{FL}, \cite{KLS}). In the case of a cubic nonlinearity,
the Cauchy problem
\eqref{NLSsemirel} 
is strictly related with the Szeg\"o model (see \cite{GG}, \cite{P}).\\
\\
We recall that by standard arguments one can prove 
the existence of one unique solution $u\in {\mathcal C}([0, T_{max}); H^1(\R))$
of \eqref{NLSsemirel},  where $T_{max}>0$. Moreover we have the alternative: 
either $T_{max}=\infty$ or $T_{max}<\infty$
and $\lim_{t\rightarrow T_{max}^-} \|u(t)\|_{H^1(\R)}=\infty$.\\

We can state our second result.
\begin{thm}\label{nlssemirel}
Let $u\in {\mathcal C}([0, T_{max}); H^1(\R))$ 
be the unique local solution of \eqref{NLSsemirel}. Then we have the following alternative:
either $T_{max}=\infty$ or $T_{max}<\infty$ 
and $\sup_{[0, T_{max})} \|u(t)\|_{H^\frac 12(\R)}=\infty$.
\end{thm}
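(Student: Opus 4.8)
The plan is to prove the contrapositive: assuming $T_{max}<\infty$ together with $M:=\sup_{[0,T_{max})}\|u(t)\|_{H^\frac12(\R)}<\infty$, I would show that $\|u(t)\|_{H^1(\R)}$ stays bounded on $[0,T_{max})$, which contradicts the local blow-up alternative recalled just before the statement. Hence a finite $T_{max}$ forces the $H^\frac12$ norm to be unbounded. The three ingredients are: conservation of mass, $\|u(t)\|_{L^2}=\|\varphi\|_{L^2}$, which together with the bound on $\|u\|_{H^\frac12}$ fully controls the low norm; the one-dimensional Br\'ezis-Gallou\"et inequality, the endpoint analogue of \eqref{improved}, namely $\|v\|_{L^\infty(\R)}\lesssim\|v\|_{H^\frac12(\R)}\sqrt{\ln(2+\|v\|_{H^1(\R)})}+1$; and a higher-order energy estimate for $\|u\|_{H^1}$ closed by a logarithmic Gronwall argument.

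For the energy estimate I would differentiate $\|u(t)\|_{H^1}^2$ in time and substitute the equation. The linear part drops, since $|D_x|$ and $1-\partial_x^2$ are positive Fourier multipliers and hence the $L^2$-pairing of $|D_x|u$ with $(1-\partial_x^2)u$ is real; the zeroth-order nonlinear contribution $\int|u|^5$ is real as well. What survives is the top-order interaction
\[
\frac{d}{dt}\|u(t)\|_{H^1}^2 = 2\lambda\,\Im\int_\R|u|^3\,\bar u\,\partial_x^2 u\,dx.
\]
The naive bound by $\|u\|_{L^\infty}^3\|\partial_x u\|_{L^2}^2$ is fatal: fed into the Br\'ezis-Gallou\"et inequality it produces a factor $(\ln(2+\|u\|_{H^1}))^{3/2}$, and the resulting differential inequality $y'\lesssim(\ln y)^{3/2}\,y$ can blow up in finite time. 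The whole point, and the step I expect to be the main obstacle, is therefore to integrate by parts so as to lower the effective power of $\|u\|_{L^\infty}$ from three to two.

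Concretely I would exploit the oscillatory structure. Using the elementary identity $\Im(\bar u\,\partial_x^2 u)=\partial_x\Im(\bar u\,\partial_x u)$ and integrating by parts transfers a derivative onto the amplitude $|u|^3$, which rewrites the interaction as $-3\int|u|^2\,\partial_x|u|\,\Im(\bar u\,\partial_x u)\,dx$. The remaining task, which is the heart of the matter, is to reorganize this quantity (presumably after introducing a suitable modified energy correction absorbing the resonant cubic term) so that it is controlled by $\lesssim\|u\|_{L^\infty}^2\|u\|_{H^1}^2$, up to lower-order terms of higher algebraic degree but lower derivative order that are harmless. Granting this reduction, the Br\'ezis-Gallou\"et inequality yields $\|u\|_{L^\infty}^2\lesssim M^2\ln(2+\|u\|_{H^1})+1$, so that $y(t)=\|u(t)\|_{H^1}^2$ obeys $y'\lesssim\big(1+\ln(2+y)\big)\,y$. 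Setting $w=\ln(2+y)$ gives $w'\lesssim 1+w$, whence $w$, and therefore $y$, remains finite on every bounded time interval, in particular up to a finite $T_{max}$. This contradicts the $H^1$ blow-up alternative and proves the theorem. The passage from a cubic to a quadratic power of the $L^\infty$ norm is exactly the quartic improvement over the cubic theory of \cite{KLR}, and is the reason why the present argument, though elementary, is more delicate than that of Theorem \ref{nls}.
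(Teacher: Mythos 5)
Your strategic frame coincides with the paper's: argue by contradiction, control the low norm via mass conservation and the assumed $H^{\frac 12}$ bound, feed the Br\'ezis--Gallou\"et inequality \eqref{improvedsemirel} into a logarithmic Gronwall inequality. Your starting identity $\frac{d}{dt}\|u\|_{H^1}^2=2\lambda\Im\int_\R|u|^3\bar u\,\partial_x^2u\,dx$ is correct, and so is your diagnosis that the naive bound $\lesssim\|u\|_{L^\infty}^3\|\partial_xu\|_{L^2}^2$ produces a fatal $\ln^{3/2}$ factor. But the proposal stops exactly where the real work begins: the sentence ``presumably after introducing a suitable modified energy correction absorbing the resonant cubic term\,\dots\ Granting this reduction'' concedes, rather than carries out, the central step. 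The paper's entire Section 2 is devoted to it: one must exhibit the modified energy ${\mathcal F}(u)$ explicitly --- it contains three correction terms, namely the cross term $2\lambda{\mathcal Re}(|D_x|u,u|u|^3)$, the term $-\frac34\lambda(||D_x|^{\frac 12}(|u|^2)|^2,|u|)$, and a commutator-type term --- then compute $\frac{d}{dt}({\mathcal F}(u)+\|u\|_{L^2}^2)$ exactly (Lemma \ref{lemsem}), bound every term on the right-hand side of \eqref{energymod} by $(1+U)^6\|u\|_{H^1}^2\ln(2+\|u\|_{H^1})$ (Lemma \ref{lemsemir}), and finally verify the comparability estimate $|{\mathcal F}(u)+\|u\|_{L^2}^2-\|u\|_{H^1}^2|\lesssim C(U)(1+\|u\|_{H^1})\ln^{\frac 32}(2+\|u\|_{H^1})$, without which a bound on the modified energy says nothing about $\|u\|_{H^1}$. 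None of this is constructed or estimated in your proposal.

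There is also a concrete reason why the route you indicate --- local integration by parts in space, modeled on NLS --- cannot simply be ``reorganized'' into the desired bound: the half-wave equation is non-local. Your rewriting $-3\int|u|^2\partial_x|u|\,\Im(\bar u\partial_xu)\,dx$ is still controlled only by the same cubic quantity $\int|u|^3|\partial_xu|^2\,dx$, so nothing has been gained yet; to gain, one must convert pieces of the interaction into exact time derivatives, and under the half-wave flow the time derivatives of $|u|$ and $|u|^2$ involve the non-local density $\Im(\bar u\,|D_x|u)$ rather than a spatial divergence, so the pointwise NLS-style identities you implicitly rely on are unavailable. This is precisely why the paper's correction terms involve $|D_x|^{\frac 12}$ of various quantities, and why the Kenig--Ponce--Vega commutator estimate (Proposition \ref{kpv}), together with the fractional Leibniz property \eqref{alg} and the Gagliardo--Nirenberg inequality \eqref{GN}, is indispensable in Lemma \ref{lemsemir}. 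As written, your reduction to $\|u\|_{L^\infty}^2\|u\|_{H^1}^2$ is an unsupported claim, and the argument does not close.
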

Next we give some concrete conditions on the initial data $\varphi$
in order to guarantee
global well--posedness of \eqref{NLSsemirel}.
We need to introduce the energy preserved
along \eqref{NLSsemirel} for $\lambda=\pm 1$:
\begin{equation}\label{enNLSsemir}E_{HW, \pm}(u)=\frac 12 \int_{\R} ||D_x|^\frac 12 u|^2 dx \pm \frac 1 5 \int_{\R} |u|^5 dx.\end{equation}
We also introduce $R\in H^\frac 12(\R)$ as the unique (non trivial) optimizer
of the following Gagliardo-Nirenberg
inequality
\begin{equation}\label{GNdispintro} 
\|f\|_{L^5(\R)} \leq C_{GN} \||D_x|^\frac 12 f\|_{L^2(\R)}^\frac 35\|f\|_{L^2(\R)}^\frac 25,
\end{equation}
that satisfies
\begin{equation}\label{normal}|D_x| R + R=R^4, \quad R(x)=R(|x|)>0.
\end{equation}
The uniqueness of $R$ defined as above is proved in \cite{RFEN}
(concerning a general proof
on the existence of optimizers for Gagliardo-Nirenberg inequalities see \cite{BFV}).\\

The next result is a version Corollary \ref{gwp} in the context of the half-wave equation. 
\begin{cor}\label{gwpHW}
Assume $\lambda=1$ then \eqref{NLSsemirel} has one unique
global solution $u\in {\mathcal C}([0, \infty); H^1(\R))$.\\
Assume $\lambda=-1$ and $\varphi$ satisfies: 
\begin{align}\label{deusemir}E_{HW,-} (\varphi)  \|\varphi\|_{L^2}^4 < E_{HW, -}(R) 
\|R\|_{L^2}^4\\\nonumber
\hbox{ and } \||D_x|^\frac 12\varphi\|_{L^2}  \|\varphi\|_{L^2(\R)}^2 < \||D_x|^\frac 12 
R\|_{L^2}  \|R\|_{L^2}^2,\end{align}
then \eqref{NLSsemirel} has one unique
global solution $u\in {\mathcal C}([0, \infty); H^1(\R))$.
\end{cor}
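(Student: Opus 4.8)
The plan is to deduce both statements from the blow-up alternative of Theorem~\ref{nlssemirel}: it suffices to bound $\|u(t)\|_{H^{1/2}(\R)}$ uniformly on $[0,T_{max})$. Multiplying \eqref{NLSsemirel} by $\bar u$ and taking imaginary parts gives conservation of the mass $\|u(t)\|_{L^2}=\|\varphi\|_{L^2}$, so controlling $\||D_x|^{1/2}u(t)\|_{L^2}$ is equivalent to controlling the full $H^{1/2}$ norm; pairing with $\partial_t\bar u$ yields conservation of $E_{HW,\pm}$. Thus the whole argument reduces to bounding the homogeneous seminorm $y(t):=\||D_x|^{1/2}u(t)\|_{L^2}^2$.

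In the defocusing case $\lambda=1$ this is immediate, since $E_{HW,+}$ is positive definite: conservation gives $\tfrac12\,y(t)\le E_{HW,+}(u(t))=E_{HW,+}(\varphi)$, and $T_{max}=\infty$ follows at once.

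In the focusing case $\lambda=-1$ I would run the Holmer--Roudenko trapping argument. Writing $m=\|\varphi\|_{L^2}^2$ and using the sharp Gagliardo--Nirenberg inequality \eqref{GNdispintro} in the form $\|u\|_{L^5}^5\le C_{GN}^5\,y^{3/2}m$, energy conservation gives
\begin{equation}\label{trap}
E_{HW,-}(\varphi)\ \ge\ \tfrac12\,y(t)-\tfrac{C_{GN}^5}{5}\,m\,y(t)^{3/2}\ =:\ g\big(y(t)\big).
\end{equation}
The function $g$ vanishes at $0$, increases to a maximum at $y_*=\big(\tfrac{5}{3C_{GN}^5 m}\big)^2$, and then decreases, with $g(y_*)=\tfrac16 y_*$. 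The two scale-invariant hypotheses in \eqref{deusemir} translate precisely into $y(0)<y_*$ and $E_{HW,-}(\varphi)<g(y_*)$. A continuity argument then confines $y(t)$ to $[0,y_*)$: if $y$ were to reach $y_*$ at some first time $t_0$, then \eqref{trap} at $t_0$ would force $E_{HW,-}(\varphi)\ge g(y_*)$, contradicting the strict inequality. Hence $y(t)$ stays bounded and Theorem~\ref{nlssemirel} gives $T_{max}=\infty$.

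The main obstacle is the focusing bookkeeping: verifying that the scale-invariant products in \eqref{deusemir} coincide exactly with $y(0)<y_*$ and $E_{HW,-}(\varphi)<g(y_*)$. This hinges on two identities for the optimizer. Pairing \eqref{normal} with $R$ gives the Nehari relation $\||D_x|^{1/2}R\|_{L^2}^2+\|R\|_{L^2}^2=\|R\|_{L^5}^5$, while the Pohozaev identity obtained by testing \eqref{normal} against $x\,\partial_x R$ reads $\tfrac{1-1}{2}\||D_x|^{1/2}R\|_{L^2}^2+\tfrac12\|R\|_{L^2}^2=\tfrac15\|R\|_{L^5}^5$, where the $\dot H^{1/2}$ term drops out precisely because $\dot H^{1/2}(\R)$ is scaling critical for \eqref{NLSsemirel}. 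Together these fix the ratios $\||D_x|^{1/2}R\|_{L^2}^2=\tfrac32\|R\|_{L^2}^2$ and $\|R\|_{L^5}^5=\tfrac52\|R\|_{L^2}^2$, from which (using $C_{GN}^5=\|R\|_{L^5}^5/\big(\||D_x|^{1/2}R\|_{L^2}^3\|R\|_{L^2}^2\big)$) one checks $y_*=\||D_x|^{1/2}R\|_{L^2}^2\|R\|_{L^2}^4/m^2$ and $g(y_*)=E_{HW,-}(R)\|R\|_{L^2}^4/m^2$. The two inequalities in \eqref{deusemir} then become exactly the strict separations from threshold required by the continuity argument (see \cite{HR}); the remaining steps are routine conservation-law bookkeeping.
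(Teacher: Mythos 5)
Your proof is correct and follows the same overall strategy as the paper's: reduction to Theorem \ref{nlssemirel} via conservation of mass and energy, positivity of $E_{HW,+}$ in the defocusing case, and in the focusing case the Holmer--Roudenko trapping argument based on the sharp Gagliardo--Nirenberg inequality \eqref{GNdispintro} plus a continuity argument (your variable $y(t)=\||D_x|^{1/2}u(t)\|_{L^2}^2$ with the mass factored out, versus the paper's scale-invariant $\||D_x|^{1/2}u\|_{L^2}\|u\|_{L^2}^2$, is a cosmetic difference). Where you genuinely diverge is in identifying the trapping threshold with the quantities attached to $R$, i.e.\ the paper's claim \eqref{claim}, which reduces to \eqref{last}. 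The paper proves \eqref{last} by computing the first variation of the Gagliardo--Nirenberg functional at the optimizer $R$, obtaining an Euler--Lagrange equation of the form $|D_x|R+aR=bR^4$ with explicit coefficients, and then forcing $a=b=1$ by comparison with \eqref{normal} (the rigidity being that otherwise $(b-1)R^4=(a-1)R$ would make $R$ constant); this yields \eqref{34}. You instead extract the ratios $\||D_x|^{1/2}R\|_{L^2}^2=\frac32\|R\|_{L^2}^2$ and $\|R\|_{L^5}^5=\frac52\|R\|_{L^2}^2$ from the Nehari and Pohozaev identities for \eqref{normal}, and only then invoke optimality (equality in \eqref{GNdispintro} at $R$) to express $C_{GN}$. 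Both computations are valid and produce identical thresholds: your $y_*$ and $g(y_*)=\frac16 y_*$ agree with the paper's $x_{max}^2/\|\varphi\|_{L^2}^4$ and $f(x_{max})/\|\varphi\|_{L^2}^4$, and your translation of the two hypotheses \eqref{deusemir} into $y(0)<y_*$ and $E_{HW,-}(\varphi)<g(y_*)$ checks out. What each route buys: yours yields the explicit ratios as a by-product (e.g.\ $E_{HW,-}(R)=\frac14\|R\|_{L^2}^2$), which are invisible in the paper's argument, and it avoids differentiating the constrained functional; its cost is that the fractional Pohozaev identity --- the vanishing of $\int(|D_x|R)\,x\partial_xR\,dx$ by $\dot H^{1/2}$-criticality --- requires justification via the regularity and decay of $R$ established in \cite{RFEN} (or, more cleanly, a scaling argument applied to the action), whereas the paper's Euler--Lagrange-plus-rigidity computation uses no information on $R$ beyond its membership in $H^{1/2}$ and the two defining properties. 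Modulo that standard justification, your proof is complete.
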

Along the paper we shall present a proof of  Corollary \ref{gwpHW}. 
Of course in the defocusing case it follows by Theorem \ref{nlssemirel}
in conjunction with the fact that the energy $E_{HW,+}$
is positive definite.
In the focusing case the proof is more involved and we need to adapt the argument in \cite{HR} in a non-local context.

The global well--posedness results above can be considered 
as an extension to the quartic half--wave equation
of part of the results proved by Krieger-Lenzmann-Raphael in \cite{KLR}.
In this paper in fact the authors treat, beside very interesting blow-up results, 
the Cauchy theory for the half-wave equation with cubic nonlinearity via the classical approach
in \cite{BG}. We should also notice that in \cite{KLR} the authors work
in $H^\frac 12(\R)$, while in Theorem \ref{nlssemirel} 
we work in $H^1(\R)$.\\
\\
A basic tool along the proof of Theorem \ref{nlssemirel}
will be the following version of \eqref{improved}:
\begin{equation}\label{improvedsemirel}
\|v\|_{L^\infty(\R)} \lesssim \|v\|_{H^{\frac 12}(\R)} \sqrt {\ln \big (2+ 
\|v\|_{H^1(\R)}\big )} +1 , \forall v\in H^1(\R).
\end{equation}
Its proof follows by a straightforward adaptation of the argument
in \cite{BG}. Hence we skip it and we shall make an extensive use of \eqref{improvedsemirel}
without any further comment.
\\
\\
{\bf Acknowledgment}: {\em N.V. is supported by the FIRB project Dinamiche Dispersive. The authors are grateful to N. Tzvetkov for interesting comments.}
\section{Proof of Theorem \ref{nls}}
Along this section we use the notations:
$$\nabla u= (\partial_x u, \partial_y u), \Delta=\partial_x^2+\partial_y^2,
(f, g)=\int_{\Omega} f\cdot \bar g \hbox{ } dx,
L^2=L^2(\Omega), H^k=H^k(\Omega).$$
We also introduce the following energy:
$${\mathcal E}(u) = \|\Delta u\|_{L^2}^2 -2\lambda {\mathcal Re} (\Delta u, u|u|^3)
-\frac 34 \lambda (\nabla |u|^2, \nabla(|u|^{2}) |u|).$$
\begin{lem}
Let $u$ be as in Theorem \ref{nls}, then we have the following identity:
\begin{align}\frac d{dt} ({\mathcal E}(u) + \|u\|_{L^2}^2)&=
-2 \lambda^2 {\mathcal Im} (\nabla u, u \nabla (|u|^6))
\\\nonumber&+ \frac{3}4 \lambda 
(|\nabla |u|^2|^2, \partial_t |u|))+2\lambda (|\nabla u|^2, \partial_t (|u|^{3})).
\end{align}
\end{lem}
\begin{proof}
Recall that $\frac d{dt} \|u\|_{L^2}^2=0$,
hence we shall treat $\frac d{dt} {\mathcal E}(u)$. Next we assume
that the solution is regular enough in order to justify all the computations.
In the case that the solution $u$ is only $H^2$, then one  can proceed by a smoothing argument via the
Yosida regularization (we skip this technical but standard regularization argument).\\
We start with the following computation:
\begin{align*}
\frac d{dt} &\|\Delta u\|_{L^2}^2= 2 {\mathcal Re} (\Delta \partial_t u, \Delta u)
=2 {\mathcal Re} (\Delta \partial_t u, -i \partial_t u + \lambda u|u|^3)\\\nonumber&=2\lambda  {\mathcal Re} (\Delta \partial_t u, u|u|^3)=2\lambda  \frac d{dt} {\mathcal Re} (\Delta u, u|u|^3)
- 2\lambda  {\mathcal Re} (\Delta u, \partial_t (u|u|^3)),\end{align*}
where we used the equation solved by $u$ in the second equality.
Next notice that
\begin{align*}
{\mathcal Re} &(\Delta u, \partial_t (u|u|^3))=
{\mathcal Re} (\Delta u, \partial_t u |u|^3) + {\mathcal Re} (\Delta u, u\partial_t (|u|^3))
\\\nonumber
&={\mathcal Re} (\Delta u, \partial_t u |u|^3) + \frac 12 (\Delta |u|^2, \partial_t (|u|^3) - (|\nabla u|^2, \partial_t (|u|^3)) =I+II+III.\end{align*}
By using the equation solved by $u$ we get
$$I=  {\mathcal Re} (\Delta u, -i\lambda u|u|^{6})
=\lambda {\mathcal Im} (\nabla u, u \nabla (|u|^{6})).$$
Moreover we have
\begin{align}II&=-\frac 12 (\nabla |u|^2, \partial_t \nabla(|u|^3))=
-\frac{3}4 (\nabla |u|^2, \partial_t (\nabla(|u|^{2}) |u|))
\\\nonumber& =-\frac{3}4 \frac d{dt} (\nabla |u|^2, \nabla(|u|^{2}) |u|)
+ \frac{3}4  ( \partial_t \nabla |u|^2, \nabla(|u|^{2}) |u|)
\\\nonumber&=-\frac{3}4 \frac d{dt} (\nabla |u|^2, \nabla(|u|^{2}) |u|)
+ \frac{3}8  ( \partial_t |\nabla |u|^2|^2, |u|)
\\\nonumber&= -\frac{3}4 \frac d{dt} (\nabla |u|^2, \nabla(|u|^{2}) |u|)
+ \frac{3}8 \frac d{dt}  (|\nabla |u|^2|^2, |u|)
-\frac{3}8  (|\nabla |u|^2|^2, \partial_t |u|).
\end{align}

\end{proof}
\begin{lem}\label{lemnls}
Let $u$ be as in Theorem \ref{nls} and $U=\sup_{[0, T_{Max})} \|u(t)\|_{H^1}$,
then we have:
\begin{align}\label{ddt}\frac d{dt} &({\mathcal E}(u) + \|u\|_{L^2}^2)\lesssim 
U^{8} \ln^{3} (2+ \|u\|_{H^2})
\\\nonumber&+ U^3\|\Delta u\|_{L^2}^2
 \ln (2+\|u\|_{H^2}) + U^2 + U \|\Delta u\|_{L^2}^2, \quad \forall t\in [0, T_{max}).\end{align}
\end{lem}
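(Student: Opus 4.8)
The plan is to bound, term by term, the three contributions on the right-hand side of the identity established in the previous lemma, and then to convert every occurrence of $\|u\|_{L^\infty}$ into a power of $U=\|u\|_{H^1}$ times a power of $\ln(2+\|u\|_{H^2})$ by means of the logarithmic Sobolev embedding \eqref{improved}. The three ingredients I would use are: (i) the pointwise Leibniz bounds $|\nabla(|u|^6)|\lesssim |u|^5|\nabla u|$ and $|\nabla|u|^2|^2\lesssim |u|^2|\nabla u|^2$; (ii) a pointwise control of the time derivatives $\partial_t|u|$ and $\partial_t(|u|^3)$ by $|\Delta u|$, read off from the equation; and (iii) a Ladyzhenskaya-type inequality for $\|\nabla u\|_{L^4}$ combined with elliptic regularity.

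For the first term I would estimate the modulus of $-2\lambda^2\,{\mathcal Im}(\nabla u, u\nabla(|u|^6))$ by $\int_\Omega |u|^6|\nabla u|^2\,dx \leq \|u\|_{L^\infty}^6\|\nabla u\|_{L^2}^2\leq U^2\|u\|_{L^\infty}^6$. Raising \eqref{improved} to the sixth power gives $\|u\|_{L^\infty}^6\lesssim U^6\ln^3(2+\|u\|_{H^2})+1$, so that this contribution is $\lesssim U^8\ln^3(2+\|u\|_{H^2})+U^2$, which accounts for the first and third terms on the right-hand side of \eqref{ddt}.

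For the remaining two terms I would first use the equation to compute $\partial_t|u|^2=2\,{\mathcal Re}(\bar u\,\partial_t u)=-2\,{\mathcal Im}(\bar u\,\Delta u)$, the nonlinear contribution dropping out because $\bar u\cdot u|u|^3=|u|^5$ is real. This yields the pointwise bounds $|\nabla|u|^2|^2\,|\partial_t|u||\lesssim |u|^2|\nabla u|^2|\Delta u|$ and $|\nabla u|^2\,|\partial_t(|u|^3)|\lesssim |u|^2|\nabla u|^2|\Delta u|$, in which the apparent singularity at the zeros of $u$ cancels. Hölder's inequality then bounds $\int_\Omega |u|^2|\nabla u|^2|\Delta u|\,dx$ by $\|u\|_{L^\infty}^2\|\nabla u\|_{L^4}^2\|\Delta u\|_{L^2}$, and the Ladyzhenskaya inequality $\|\nabla u\|_{L^4}^2\lesssim \|\nabla u\|_{L^2}\|\nabla^2 u\|_{L^2}$ together with the elliptic estimate $\|\nabla^2 u\|_{L^2}\lesssim \|\Delta u\|_{L^2}$ gives $\|\nabla u\|_{L^4}^2\lesssim U\|\Delta u\|_{L^2}$. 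Inserting the square of \eqref{improved}, namely $\|u\|_{L^\infty}^2\lesssim U^2\ln(2+\|u\|_{H^2})+1$, each of these two terms is then $\lesssim U^3\|\Delta u\|_{L^2}^2\ln(2+\|u\|_{H^2})+U\|\Delta u\|_{L^2}^2$, matching the second and fourth terms of \eqref{ddt}.

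I expect the main obstacle to be the elliptic regularity $\|\nabla^2 u\|_{L^2}\lesssim \|\Delta u\|_{L^2}$ on the general open set $\Omega$, which is precisely where the hypotheses $(H1)$, $(H2)$ must be invoked: the extension operator $L$ of $(H1)$ lets one transplant the Euclidean Ladyzhenskaya inequality for $\nabla u$, while the self-adjointness of the Dirichlet Laplacian in $(H2)$ is what controls $\nabla^2 u$ by $\Delta u$, the lower-order $\|u\|_{H^1}\leq U$ corrections being harmless. A secondary point requiring care is the rigorous justification of the pointwise formula for $\partial_t|u|$ at the zeros of $u$, which I would handle by the same Yosida/smoothing regularization already used in the previous lemma, together with the elementary bookkeeping ensuring that the sixth power of $\|u\|_{L^\infty}$ produces $\ln^3$ while the square produces only $\ln$.
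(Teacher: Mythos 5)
Your proposal is correct and takes essentially the same route as the paper: you estimate term by term the identity of the preceding lemma, using the pointwise Leibniz bounds, the equation to replace time derivatives of $|u|$, $|u|^3$ by $|\Delta u|$ (up to powers of $|u|$), H\"older's inequality together with $\|\nabla u\|_{L^4}^2\lesssim \|\Delta u\|_{L^2}\|\nabla u\|_{L^2}$, and the logarithmic embedding \eqref{improved} to convert $\|u\|_{L^\infty}$ into powers of $U$ and of $\ln(2+\|u\|_{H^2})$. The only cosmetic differences are that the paper bounds $|\partial_t|u||\le|\partial_t u|\lesssim |\Delta u|+|u|^4$ via the diamagnetic inequality (so an extra $\int|\nabla u|^2|u|^6\,dx$ term appears, handled as in the first estimate), whereas you exploit the exact cancellation of the nonlinearity in $\partial_t|u|^2=-2\,{\mathcal Im}(\bar u\,\Delta u)$, and that the paper quotes the inequality \eqref{GNL4} directly rather than deriving it from the extension operator of $(H1)$ and the elliptic estimate furnished by $(H2)$.
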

\begin{proof}
Next we collect some useful inequalities satisfied by any solution $u$ of \eqref{NLS}:
\begin{align*}|{\mathcal Im} &(\nabla u, u \nabla (|u|^{6}))|
\lesssim \int |\nabla u|^2 \cdot |u|^{6} dx
\\\nonumber&\lesssim \|u\|_{H^1}^2 \|u\|_{L^\infty}^{6} 
\lesssim \|u\|_{H^1}^{8} \ln^{3} (2+ \|u\|_{H^2}) +\|u\|_{H^1}^{2} ,\end{align*}
where we used \eqref{improved}.
We also have
\begin{equation}\label{first}
\int |\nabla |u|^2|^2 \cdot |\partial_t |u|| dx\lesssim \int |\nabla u|^2 \cdot |u|^{6} dx
+ \int |\nabla u|^2 \cdot |\Delta u| \cdot |u|^{2} dx,\end{equation}
where we used the diamagnetic inequality $|\partial_t |u||\leq |\partial_t u|$
and the equation solved by $u$.
By combining the H\"older inequality, the logarithmic Sobolev embedding \eqref{improved}
and the Gagliardo-Nirenberg
inequality 
\begin{equation}\label{GNL4}\|\nabla u\|_{L^4}\lesssim \|\Delta u\|_{L^2}^\frac 12 \|\nabla u\|_{L^2}^\frac 12,\end{equation}
we can continue the estimate above as follows:
$$
...\lesssim \|u\|_{H^1}^{8} \ln^{3} (2+ \|u\|_{H^2}) +\|u\|_{H^1}^2+  \|\Delta u\|_{L^2}^2
\|u\|_{H^1}^{3} \ln (2+\|u\|_{H^2}) + \|\Delta u\|_{L^2}^2
\|u\|_{H^1}.$$
Finally notice that (by using the equation solved by $u$)
$$\int |\nabla u|^2 \cdot \partial_t (|u|^{3}) dx \lesssim 
\int |\nabla u|^2 \cdot |u|^{6}
+\int |\nabla u|^2 \cdot |\Delta u| \cdot |u|^{2} dx ,$$
and we can continue as in \eqref{first}.

\end{proof}

\noindent {\em Proof of Theorem \ref{nls}}
Assume by the absurd that $$T_{max}<\infty \hbox{ and } 
U = \sup_{t\in [0, T_{max})} \|u\|_{H^1}<\infty.$$
By elementary computations we get:
$$|(\nabla |u|^2, \nabla(|u|^{2}) |u|)|\lesssim (\int |\nabla u|^4 dx)^\frac 12
\cdot (\int |u|^6 dx)^\frac 12\lesssim U^4 \|\Delta u\|_{L^2},$$
where we used \eqref{GNL4}, and we also have 
$$|(\Delta u, u|u|^3)|\lesssim \|\Delta u\|_{L^2} \|u\|_{L^8}^4\lesssim U^4\|\Delta u\|_{L^2}.
$$
Hence
\begin{equation}\label{coercive}
\|u\|_{H^2}^2\lesssim {\mathcal E}(u) + \|u\|_{L^2}^2, \hbox{ for }  \|u\|_{H^2}>R=R(U)>0.
\end{equation}
Next recall that by definition of $T_{max}$ we have
$\|u(t)\|_{H^2}>R, \hbox{ } \forall t>\bar T\in (0, T_{max})$.
Hence by combining \eqref{coercive} with \eqref{ddt}
we get: 
\begin{align*}\|u(t)\|^2_{H^2}&\lesssim \|u(\bar T)\|_{H^2}^2
+ U^{8} \int_{\bar T} ^t  \ln^{3} (2+ \|u\|_{H^2}) dt+ U^{3}  \int_{\bar T}^t\|u\|_{H^2}^2 
\ln (2+\|u\|_{H^2}) dt \\\nonumber
&+ U \int_{\bar T}^t \|u\|_{H^2}^2 dt+U^2 (t-\bar T), \quad \forall t\in [\bar T, T_{max}).\end{align*}
We are in a position to conclude, arguing as in \cite{BG}, 
that $\sup_{t\in [0, T_{max})} \|u(t)\|_{H^2}<\infty$, and hence we get a contradiction
with the definition of $T_{max}$.

\hfill$\Box$

\section{The half-wave equation}
Along this section we use the notations:
$$|D_x|^s=(\sqrt{-\partial_x^2})^s,
(f, g)=\int_{\R} f\cdot \bar g \hbox{ } dx,
L^p=L^p(\R), H^k=H^k(\R).$$
We also introduce the energy
\begin{align}{\mathcal F}(u)= \|\partial_x u\|_{L^2}^2+2\lambda  {\mathcal Re} (|D_x| u, u|u|^{3})
-\frac 34 \lambda (||D_x|^\frac 12  (|u|^2)|^2, |u|)
\\\nonumber
+\lambda (|D_x|^\frac 12  |u|^2 -\bar u
|D_x|^\frac 12  u  - u |D_x|^\frac 12  \bar u  , |D_x|^\frac 12 (|u|^{3})).
\end{align}
The following proposition will be crucial in the sequel.
\begin{prop} (See \cite{kpv})\label{kpv}
We have the following estimate:
$$\||D_x|^s (fg) - g|D_x|^s f  - f |D_x|^s g\|_{L^p}\lesssim \||D_x|^{s_1} f\|_{L^q} 
\||D_x|^{s_2} f\|_{L^r},$$
where
$$\frac 1 p=\frac 1q+ \frac 1r, \hbox{ } 1<p,q,r<\infty, \hbox{ } 1>s=s_1+s_2>0, s_i\geq 0.$$ 
\end{prop}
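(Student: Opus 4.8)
The plan is to reduce the estimate to the Coifman--Meyer bilinear multiplier theorem after a Littlewood--Paley decomposition. Passing to the Fourier side, the bilinear operator
$$B(f,g)=|D_x|^s(fg)-g|D_x|^sf-f|D_x|^sg$$
has symbol
$$m(\xi,\eta)=|\xi+\eta|^s-|\xi|^s-|\eta|^s,$$
in the sense that $\widehat{B(f,g)}(\zeta)=\int m(\zeta-\eta,\eta)\hat f(\zeta-\eta)\hat g(\eta)\,d\eta$. Since $m$ is homogeneous of degree $s$ and symmetric in $\xi\leftrightarrow\eta$, the whole game is to exploit the cancellation built into the commutator: the subtraction of $|\xi|^s$ and $|\eta|^s$ forces $m$ to vanish to an improved order precisely where one frequency dominates the other, and it is this gain that we want to convert into the product $|D_x|^{s_1}f\cdot|D_x|^{s_2}g$ on the right-hand side.

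First I would decompose $f=\sum_j f_j$ and $g=\sum_k g_k$ into Littlewood--Paley pieces localized at frequencies $|\xi|\sim2^j$ and $|\eta|\sim2^k$, and split $B(f,g)$ into the high--low region $\{2^k\ll2^j\}$, its mirror image $\{2^j\ll2^k\}$ (handled identically by the symmetry of $m$), and the high--high region $\{2^j\sim2^k\}$. In the high--low region one has $|\xi+\eta|\sim|\xi|$, so a first order Taylor expansion gives $|\xi+\eta|^s-|\xi|^s=s\,\mathrm{sign}(\xi)|\xi|^{s-1}\eta+O(|\eta|^2|\xi|^{s-2})$; subtracting $|\eta|^s$ and factoring out $|\xi|^{s_1}|\eta|^{s_2}$ leaves a symbol $\sigma(\xi,\eta)=m(\xi,\eta)/(|\xi|^{s_1}|\eta|^{s_2})$ that, thanks to $0<s<1$ and $s_1,s_2\ge0$, satisfies the Coifman--Meyer estimates $|\partial_\xi^a\partial_\eta^b\sigma|\lesssim(|\xi|+|\eta|)^{-a-b}$ on each localized annulus. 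The extra derivative produced by the commutator is exactly what compensates the loss coming from $|\xi|^{s-1}$. Applying the Coifman--Meyer theorem to the operator with symbol $\sigma$, which maps $L^q\times L^r\to L^p$ for $\frac1p=\frac1q+\frac1r$ and $1<p,q,r<\infty$, and restoring the two derivatives as $|D_x|^{s_1}f$ and $|D_x|^{s_2}g$, yields the desired bound for this region.

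The high--high region is where the main obstacle lies. Here $|\xi|\sim|\eta|$, but the output frequency $\xi+\eta$ may take any value up to $\max(|\xi|,|\eta|)$, and in particular can be arbitrarily small near the diagonal $\xi+\eta\approx0$. One checks that $m$ itself stays bounded by $|\xi|^s\sim|\eta|^s$ there (no singularity is created, since $|\xi+\eta|^s\to0$), so after factoring $|\xi|^{s_1}|\eta|^{s_2}$ the remaining symbol is again of Coifman--Meyer type on each diagonal piece. The delicate point is the summation: one must further decompose the output into Littlewood--Paley pieces $P_l$ with $2^l\lesssim2^j$ and sum over both $l$ and the diagonal index $j\sim k$. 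I would control this sum using Bernstein's inequality together with the square-function characterization of $L^p$ for $1<p<\infty$ (equivalently the Fefferman--Stein maximal inequality), which absorbs the summation over output scales at the cost of only a harmless geometric factor. Assembling the three regions gives the claimed estimate; the restriction $s<1$ is used precisely to keep $\sigma$ a genuine Coifman--Meyer symbol in the high--low interaction, while $s>0$ guarantees that no spurious cancellation destroys the gain.
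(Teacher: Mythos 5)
The paper itself offers no proof of this proposition: it is quoted directly from Kenig--Ponce--Vega \cite{kpv}, so there is no internal argument to measure yours against. Judged on its own, your scaffolding — Littlewood--Paley decomposition into high--low, low--high and high--high interactions, a further decomposition of the output frequency, square functions and Fefferman--Stein — is the right skeleton and is in the spirit of the known proofs. But the central claim on which both of your main regions rest is false, and it is exactly where the whole difficulty of the proposition sits.

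Concretely, the normalized symbol $\sigma(\xi,\eta)=\bigl(|\xi+\eta|^s-|\xi|^s-|\eta|^s\bigr)/\bigl(|\xi|^{s_1}|\eta|^{s_2}\bigr)$ does \emph{not} satisfy the Coifman--Meyer bounds $|\partial_\xi^a\partial_\eta^b\sigma|\lesssim(|\xi|+|\eta|)^{-a-b}$ — neither globally nor uniformly on dyadic blocks. The commutator cancellation makes $\sigma$ \emph{bounded}, but it does nothing for derivatives in the low-frequency variable: in the high--low region $|\eta|\sim 2^k\ll 2^j\sim|\xi|$, the piece $|\eta|^s/(|\xi|^{s_1}|\eta|^{s_2})=(|\eta|/|\xi|)^{s_1}$ has $|\partial_\eta^b\sigma|\sim|\eta|^{s_1-b}|\xi|^{-s_1}$, so the Coifman--Meyer constant on the block $(j,k)$ is of size $2^{(j-k)(b-s_1)}$, which blows up as $j-k\to\infty$ (since $b\ge 1>s\ge s_1$); applying Coifman--Meyer block by block and summing over $k<j$ therefore diverges. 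The same failure occurs in your high--high region: $\partial_\xi|\xi+\eta|^s\sim|\xi+\eta|^{s-1}$ costs the reciprocal of the \emph{output} frequency $2^l$, not of $2^j$. This is not a removable technicality — if $\sigma$ were a Coifman--Meyer symbol the proposition would be a one-line corollary of that theorem, which it is not; this is precisely why it requires a separate proof in \cite{kpv}. The repair is to abandon the Coifman--Meyer derivative condition and instead use what is actually true on each block: product-type bounds ($\xi$-derivatives cost $2^{-j}$, $\eta$-derivatives cost $2^{-k}$, i.e.\ adapted to the block dimensions) together with the sup-norm gain $\sup|\sigma|\lesssim 2^{-(j-k)\min(s_1,\,1-s_2)}$ furnished by your Taylor expansion. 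Expanding the block symbol in a double Fourier series converts these into a bound on the block operator norm by that geometric gain, after which the square-function and maximal-function machinery you invoke does sum the blocks. Finally, you must treat separately the endpoints $s_1=0$ (and symmetrically $s_2=0$), where the gain $\min(s_1,1-s_2)$ vanishes: there the no-decay part of the symbol is exactly the constant $1$ on the high--low region, i.e.\ the term $f\,|D_x|^sg$ itself, which must be handled directly (by H\"older, or as a plain paraproduct), not by any smallness argument.
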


\begin{lem}\label{lemsem} Let $u$ be as in Theorem \ref{nlssemirel}. Then we have the following identity:
\begin{align}\label{energymod}& \frac d{dt} ({\mathcal F}(u)+\|u\|_{L^2}^2)
=-2\lambda^2 {\mathcal Im} (|D_x| u, u|u|^{6}) +2\lambda (
||D_x|^\frac 12  u|^2, \partial_t (|u|^{3}))
\\\nonumber 
&+\lambda (|D_x|^\frac 12  \partial_t (|u|^2) - \partial_t(\bar u
|D_x|^\frac 12  u)  - \partial_t(u |D_x|^\frac 12  \bar u)  , |D_x|^\frac 12  (|u|^{3}))
\\\nonumber &+2\lambda {\mathcal Re} (
|D_x|^\frac 12  u, |D_x|^\frac 12 (u\partial_t (|u|^{3}))- |D_x|^\frac 12 u\partial_t (|u|^{3})
-u |D_x|^\frac 12 \partial_t (|u|^{3}))\\\nonumber&-\frac 34\lambda (||D_x|^\frac 12  (|u|^2)|^2,
\partial_t |u|)
+ \frac 32\lambda ( |D_x|^\frac 12  (|u|^2) ,   |D_x|^\frac 12 |u|  \partial_t (|u|^{2})) 
\\\nonumber&+ \frac 32\lambda ( |D_x|^\frac 12  (|u|^2) ,  |D_x|^\frac 12 (\partial_t (|u|^{2})|u|)
-|u||D_x|^\frac 12 \partial_t (|u|^{2}) -\partial_t (|u|^{2})|D_x|^\frac 12  |u|)
.\end{align}

\end{lem}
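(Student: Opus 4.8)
The plan is to mirror, term by term, the computation that produces the NLS identity for problem \eqref{NLS}, replacing every use of the ordinary Leibniz rule (which fails for the nonlocal operator $|D_x|$) by the fractional Leibniz decomposition of Proposition \ref{kpv}. Since $\frac d{dt}\|u\|_{L^2}^2=0$ by mass conservation, I only have to differentiate ${\mathcal F}(u)$; as in the NLS case I would assume $u$ smooth and recover the $H^1$ statement by a Yosida smoothing argument. First I would differentiate the leading term: writing $\|\partial_x u\|_{L^2}^2=\||D_x|u\|_{L^2}^2$, the product rule gives $\frac d{dt}\||D_x|u\|_{L^2}^2=2{\mathcal Re}(|D_x|\partial_t u,|D_x|u)$. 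Substituting $|D_x|u=i\partial_t u-\lambda u|u|^3$ from \eqref{NLSsemirel} into the second slot, and using that $(|D_x|\partial_t u,\partial_t u)$ is real by self-adjointness of $|D_x|$, the $\partial_t u$-paired term is purely imaginary and drops, leaving $-2\lambda{\mathcal Re}(|D_x|\partial_t u,u|u|^3)$. Integrating by parts in time then gives $-2\lambda\frac d{dt}{\mathcal Re}(|D_x|u,u|u|^3)+2\lambda{\mathcal Re}(|D_x|u,\partial_t(u|u|^3))$, where the first piece cancels $\frac d{dt}$ of the second summand $2\lambda{\mathcal Re}(|D_x|u,u|u|^3)$ of ${\mathcal F}$.

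It remains to expand $2\lambda{\mathcal Re}(|D_x|u,\partial_t(u|u|^3))$. Splitting $\partial_t(u|u|^3)=\partial_t u\,|u|^3+u\,\partial_t(|u|^3)$ and inserting the equation into $\partial_t u$, one contribution is purely imaginary and drops, and the other is exactly $-2\lambda^2{\mathcal Im}(|D_x|u,u|u|^6)$, the first term of \eqref{energymod}. The heart of the matter is the remaining piece $2\lambda{\mathcal Re}(|D_x|u,u\partial_t(|u|^3))$. Here I would factor $|D_x|=|D_x|^{\frac12}|D_x|^{\frac12}$, move one half-derivative by self-adjointness to obtain $2\lambda{\mathcal Re}(|D_x|^{\frac12}u,|D_x|^{\frac12}(u\partial_t(|u|^3)))$, and apply Proposition \ref{kpv} with $s=s_1+s_2=\tfrac12$. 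The genuine remainder is precisely the factor appearing in the fourth line of \eqref{energymod}; the Leibniz term $\partial_t(|u|^3)\,|D_x|^{\frac12}u$ pairs with $|D_x|^{\frac12}u$ to give $2\lambda(||D_x|^{\frac12}u|^2,\partial_t(|u|^3))$, the second term of \eqref{energymod}.

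The last Leibniz term, $2\lambda{\mathcal Re}(|D_x|^{\frac12}u,u|D_x|^{\frac12}\partial_t(|u|^3))$, I would rewrite as $2\lambda({\mathcal Re}(\bar u|D_x|^{\frac12}u),|D_x|^{\frac12}\partial_t(|u|^3))$ and use the identity $2{\mathcal Re}(\bar u|D_x|^{\frac12}u)=|D_x|^{\frac12}(|u|^2)-\big(|D_x|^{\frac12}(|u|^2)-\bar u|D_x|^{\frac12}u-u|D_x|^{\frac12}\bar u\big)$, the subtracted quantity being again a Proposition \ref{kpv} remainder. The remainder part here cancels against the contribution of $\partial_t$ falling on $|D_x|^{\frac12}(|u|^3)$ in $\frac d{dt}$ of the last summand of ${\mathcal F}$, while $\partial_t$ falling on the remainder there produces exactly the third line of \eqref{energymod}. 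What survives is the single main piece $\lambda(|D_x|^{\frac12}(|u|^2),|D_x|^{\frac12}\partial_t(|u|^3))$. Finally I would process it using $\partial_t(|u|^3)=\tfrac32|u|\partial_t(|u|^2)$ and one more fractional Leibniz decomposition of $|D_x|^{\frac12}(|u|\partial_t(|u|^2))$: this yields the remainder term of the last line of \eqref{energymod}, the term $\tfrac32\lambda(|D_x|^{\frac12}(|u|^2),|D_x|^{\frac12}|u|\,\partial_t(|u|^2))$ of the fifth line, and one leftover $\tfrac32\lambda(|u||D_x|^{\frac12}(|u|^2),|D_x|^{\frac12}\partial_t(|u|^2))$.

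To close the computation I would differentiate the correction $-\tfrac34\lambda(||D_x|^{\frac12}(|u|^2)|^2,|u|)$ in ${\mathcal F}$: the product rule gives $-\tfrac34\lambda(||D_x|^{\frac12}(|u|^2)|^2,\partial_t|u|)$, which is the first term of the fifth line of \eqref{energymod}, together with $-\tfrac32\lambda(|u||D_x|^{\frac12}(|u|^2),|D_x|^{\frac12}\partial_t(|u|^2))$, which cancels exactly the leftover from the previous step. Collecting all surviving contributions reproduces \eqref{energymod}. I expect the main obstacle to be organizational rather than conceptual: because $|D_x|$ obeys no Leibniz rule, every product $|D_x|^{\frac12}(\,\cdot\,)$ must be opened through Proposition \ref{kpv}, and the identity holds only because each of the many remainder terms is either retained verbatim in \eqref{energymod} or annihilated by a matching term; the delicate part is bookkeeping the constants $\tfrac32,\tfrac34$ and verifying these pairwise cancellations, which is precisely what makes this argument heavier than its NLS counterpart.
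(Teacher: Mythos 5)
Your proposal is correct and takes essentially the same route as the paper's proof: differentiate $\|\partial_x u\|_{L^2}^2$, use the equation and self-adjointness of $|D_x|$, and apply Proposition \ref{kpv} at exactly the same three spots (on $|D_x|^{\frac 12}(u\partial_t(|u|^3))$, on $\bar u|D_x|^{\frac 12}u + u|D_x|^{\frac 12}\bar u$ versus $|D_x|^{\frac 12}(|u|^2)$, and on $|D_x|^{\frac 12}(|u|\partial_t(|u|^2))$ after writing $\partial_t(|u|^3)=\tfrac 32 |u|\partial_t(|u|^2)$). The only cosmetic difference is directional: you differentiate each correction term of ${\mathcal F}$ and exhibit pairwise cancellations, whereas the paper expands $\frac{d}{dt}\|\partial_x u\|_{L^2}^2$ and recognizes total time derivatives to move onto the left-hand side --- the same computation read in opposite directions.
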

\begin{proof} Recall that $\frac d{dt} \|u\|_{L^2}^2=0$,
hence we shall treat $\frac d{dt} {\mathcal F}(u)$. In the sequel we assume
that the solution is regular enough in order to justify the following computations.
The proof in the case of lower regular solutions (i.e. $H^1$ solutions), can be done 
by a standard density argument. However we skip the details.\\
We make the following computation
\begin{align*}
\frac d{dt} &\|\partial_x u\|_{L^2}^2= 2 {\mathcal Re} (|D_x| \partial_t u, |D_x| u)
=2 {\mathcal Re} 
(|D_x| \partial_t u, i \partial_t u - \lambda u|u|^{3})\\\nonumber&=-2\lambda  {\mathcal Re} (|D_x| \partial_t u, u|u|^{3})=-2\lambda  \frac d{dt} {\mathcal Re} (|D_x| u, u|u|^{3})
+2\lambda  {\mathcal Re} (|D_x| u, \partial_t (u|u|^3)),\end{align*}
where we used the equation solved by $u$.
Next notice that
\begin{align*}
&{\mathcal Re} (|D_x| u, \partial_t (u|u|^{3}))\\\nonumber
&={\mathcal Re} (|D_x| u, \partial_t u |u|^{3}) + {\mathcal Re} (
|D_x|  u, u\partial_t (|u|^{3}))
=I+II.\end{align*}
Concerning $I$ we get (by using the equation solved by $u$)
$$I=-\lambda {\mathcal Im} (|D_x| u, u|u|^{6}),$$
and for $II$
we have
\begin{align}II&= {\mathcal Re} (
|D_x|^\frac 12  u, |D_x|^\frac 12 (u\partial_t (|u|^{3})))
\\\nonumber &= {\mathcal Re} (
|D_x|^\frac 12  u, |D_x|^\frac 12 u\partial_t (|u|^{3}))
+ {\mathcal Re} (
|D_x|^\frac 12  u,  u |D_x|^\frac 12 \partial_t (|u|^{3}))
\\\nonumber &+ {\mathcal Re} (
|D_x|^\frac 12  u, |D_x|^\frac 12 (u\partial_t (|u|^{3}))- |D_x|^\frac 12 u\partial_t (|u|^{3})
-u |D_x|^\frac 12 \partial_t (|u|^{3})),
\end{align}
that can be written as (recall $\partial_t (|u|^3)=\frac 32 \partial_t (|u|^2) |u|$)
\begin{align}&...
= {\mathcal Re} (
|D_x|^\frac 12  u, |D_x|^\frac 12 u\partial_t (|u|^{3}))
+ \frac 34 ( |D_x|^\frac 12  (|u|^2) ,   |D_x|^\frac 12 (\partial_t (|u|^{2}) |u|)) \\\nonumber&
- 
\frac 12 (|D_x|^\frac 12  (|u|^2) -\bar u
|D_x|^\frac 12  u  - u |D_x|^\frac 12  \bar u  , |D_x|^\frac 12 \partial_t (|u|^{3}))
\\\nonumber&+ {\mathcal Re} (
|D_x|^\frac 12  u, |D_x|^\frac 12 (u\partial_t (|u|^{3}))- |D_x|^\frac 12 u\partial_t (|u|^{3})
-u |D_x|^\frac 12 \partial_t (|u|^{3}))\\\nonumber&=II_1+II_2+II_3+II_4.
\end{align}
Next notice that 
\begin{align*}II_2&=\frac 34 ( |D_x|^\frac 12  (|u|^2) ,   |u||D_x|^\frac 12 \partial_t (|u|^{2}))
+ \frac 34 ( |D_x|^\frac 12  (|u|^2) ,   |D_x|^\frac 12 |u|  \partial_t (|u|^{2})) 
\\\nonumber& + \frac 34 ( |D_x|^\frac 12  (|u|^2) ,  |D_x|^\frac 12 (\partial_t (|u|^{2})|u|)
-|u||D_x|^\frac 12 \partial_t (|u|^{2}) -\partial_t (|u|^{2})|D_x|^\frac 12  |u|)
\end{align*}
and hence
\begin{align*}...&=\frac 38 (\partial_t ||D_x|^\frac 12  (|u|^2)|^2, |u|)
+ \frac 34 ( |D_x|^\frac 12  |u|^2 ,   |D_x|^\frac 12 |u| \partial_t (|u|^{2})) 
\\\nonumber&+ \frac 34 ( |D_x|^\frac 12  (|u|^2) ,  |D_x|^\frac 12 (\partial_t (|u|^{2})|u|)
-|u||D_x|^\frac 12 \partial_t (|u|^{2}) -\partial_t (|u|^{2})|D_x|^\frac 12  |u|)
\\\nonumber &
= \frac 38 \frac d{dt}( ||D_x|^\frac 12  (|u|^2)|^2, |u|)
\\\nonumber&
-  \frac 38 (||D_x|^\frac 12  |u|^2|^2,
\partial_t |u|) + \frac 34 ( |D_x|^\frac 12  (|u|^2) ,   |D_x|^\frac 12 |u|  \partial_t (|u|^{2})) 
\\\nonumber&+ \frac 34 ( |D_x|^\frac 12  (|u|^2) ,  |D_x|^\frac 12 (\partial_t (|u|^{2})|u|)
-|u||D_x|^\frac 12 \partial_t (|u|^{2}) -\partial_t (|u|^{2})|D_x|^\frac 12  |u|).
\end{align*}
Moreover we have  
\begin{align*}II_3&= - \frac 12  \frac d{dt}
(|D_x|^\frac 12  (|u|^2) -\bar u
|D_x|^\frac 12  u  - u |D_x|^\frac 12  \bar u  , |D_x|^\frac 12  (|u|^{3}))
\\\nonumber&+ \frac 12 (|D_x|^\frac 12 \partial_t  (|u|^2) - \partial_t(\bar u
|D_x|^\frac 12  u)  - \partial_t(u |D_x|^\frac 12  \bar u)  , |D_x|^\frac 12  (|u|^{3})).
\end{align*}

\end{proof}
\begin{lem}\label{lemsemir}
Let $u$ be as in Theorem \ref{nlssemirel} and let $U=\sup_{[0, T_{max})} \|u(t)\|_{H^{\frac 12}}$,
then we have
$$\frac d{dt} ({\mathcal F}(u)+\|u\|_{L^2}^2)
\lesssim (1+U)^6 \|u\|_{H^1}^2\ln (2+ \|u\|_{H^1}).$$
\end{lem}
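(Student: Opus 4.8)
The plan is to estimate separately each of the seven terms on the right–hand side of the identity \eqref{energymod} of Lemma \ref{lemsem} by the single quantity $(1+U)^6\|u\|_{H^1}^2\ln(2+\|u\|_{H^1})$. Two structural facts drive everything. First, since $\partial_t u=-i|D_x|u-i\lambda u|u|^3$ while $|u|^2$ and $|u|^5$ are real, the quartic nonlinearity contributes nothing to the evolution of the modulus: one has $\partial_t|u|^2=2\,\mathrm{Im}(\bar u|D_x|u)$, so that $|\partial_t(|u|^k)|\lesssim|u|^{k-1}|D_x u|$ carries \emph{exactly one} full spatial derivative and no nonlinear growth. Second, the pointwise inequality $\big||u(x)|-|u(y)|\big|\le|u(x)-u(y)|$ and its gradient analogue give the monotonicity $\|\,|u|\,\|_{\dot H^{1/2}}\le\|u\|_{\dot H^{1/2}}\le U$ and $\|\,|u|\,\|_{\dot H^1}\le\|u\|_{\dot H^1}\le\|u\|_{H^1}$, so that taking absolute values never increases the homogeneous norms that appear. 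Throughout I use the logarithmic Sobolev inequality \eqref{improvedsemirel} in the form $\|u\|_{L^\infty}^2\lesssim U^2\ln(2+\|u\|_{H^1})+1$ and the one–dimensional Gagliardo–Nirenberg bound $\|u\|_{L^\infty}^2\lesssim U\|u\|_{H^1}$.

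The terms not involving the commutator structure of Proposition \ref{kpv} — namely $(|D_x|u,u|u|^6)$, $(||D_x|^{1/2}u|^2,\partial_t(|u|^3))$, and the two terms built from $(|D_x|^{1/2}(|u|^2),\,\cdot\,)$ without a commutator correction — are handled by Cauchy–Schwarz and Hölder after redistributing derivatives through $|D_x|=|D_x|^{1/2}|D_x|^{1/2}$ and self-adjointness. The guiding principle is to produce factors $\||D_x|^{1/2}u\|_{L^2}\le U$ wherever possible and to place \emph{at most one} full derivative in $L^2$, read as $\||D_x|u\|_{L^2}\le\|u\|_{H^1}$; for instance the worst such term becomes $\||D_x|^{1/2}u\|_{L^2}^2\|u\|_{L^\infty}^6\le U^2\|u\|_{L^\infty}^6$ after moving one half-derivative. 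The remaining powers of $\|u\|_{L^\infty}$ are then split between \eqref{improvedsemirel}, which I invoke on exactly one factor to produce the single logarithm, and plain Gagliardo–Nirenberg, used on the others to supply precisely the two powers of $\|u\|_{H^1}$ and to keep the logarithm at first power.

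The delicate part is the three commutator terms (third, fourth and last lines of \eqref{energymod}), for which I apply Proposition \ref{kpv} with $s=\tfrac12$, after splitting $\partial_t u$ into its linear part $-i|D_x|u$ and its nonlinear part $-i\lambda u|u|^3$. For the linear part, and likewise for the commutators built on $\partial_t(|u|^k)$ which by the first fact are of the same ``one extra derivative'' type, the crucial point is never to land the commutator's fractional weight on the factor that already carries the full derivative, since that would force an uncontrolled $\dot H^{3/2}$ norm. Instead I keep that factor at order $s_i=0$ and in $L^2$, so that it reads as $\|u\|_{H^1}$ times $L^\infty$ factors, and place the entire $\tfrac12$-derivative on the other, derivative-free factor; there Sobolev embedding together with the interpolation $\||D_x|^{1/2}v\|_{L^r}\lesssim\||D_x|^{1-1/r}v\|_{L^2}$ and the modulus-monotonicity give the second, nearly full power $\lesssim U^{2/r}\|u\|_{H^1}^{1-2/r}$. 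To make the Hölder exponents admissible — all in $(1,\infty)$ with $q=2$ on the full-derivative factor — I estimate the commutator in $L^p$ with $p$ \emph{slightly below} $2$ and pair it against the remaining factor in $L^{p'}$; the small loss surfaces only as $\|u\|_{H^1}^{2-\varepsilon}U^{\varepsilon}\le\|u\|_{H^1}^2$ and is harmless.

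Finally, the purely nonlinear commutator contribution produced by the $u|u|^3$ part of $\partial_t u$ carries \emph{no} full derivative and is of degree eight; it is treated directly with Proposition \ref{kpv} at $p=2$, $s_1=s_2=\tfrac14$, $q=r=4$, using $\||D_x|^{1/4}u\|_{L^4}\lesssim\||D_x|^{1/2}u\|_{L^2}\le U$, which yields a bound of the form $\|u\|_{L^\infty}^5U^3$; distributing $\|u\|_{L^\infty}^5$ as four Gagliardo–Nirenberg factors ($\lesssim U^2\|u\|_{H^1}^2$) and one logarithmic-Sobolev factor lands again inside the target. I expect the commutator terms of the previous paragraph to be the main obstacle: the very design of the correction terms in the energy $\mathcal F$ is what removes the top-order derivatives, and the estimates close only because Proposition \ref{kpv} allows one to distribute the half-derivative \emph{away} from the factor already carrying $|D_x|u$, in combination with the monotonicity of the homogeneous norms under the modulus and the $p<2$ Hölder split; keeping a single logarithm and exactly two powers of $\|u\|_{H^1}$ uniformly across all seven terms is the bookkeeping I expect to be most error-prone.
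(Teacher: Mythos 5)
Your proposal follows the paper's proof in all essentials: the same term-by-term estimation of the seven contributions in \eqref{energymod}, the same key tool (Proposition \ref{kpv}, always applied so that the factor already carrying a full derivative sits at $s_2=0$ in $L^2$, while the half-derivative lands on the other factor), and the same combination of \eqref{GN}, \eqref{alg} and \eqref{improvedsemirel} to produce exactly one logarithm. Within this common architecture you make three local changes. First, you use $\partial_t|u|^2=2\,{\mathcal Im}(\bar u|D_x|u)$, so time derivatives of moduli carry one derivative and no nonlinearity; the paper instead uses $|\partial_t|u||\le|\partial_t u|$ together with the equation, i.e. $\|\partial_t u\|_{L^2}\lesssim\|u\|_{H^1}+\|u\|_{L^8}^4\lesssim\|u\|_{H^1}+\|u\|_{H^{1/2}}^4$, which costs extra powers of $U$ but closes just as well; your identity is a genuine simplification, though not necessary. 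Second, in the commutator terms you split $\partial_t u$ into $-i|D_x|u$ and $-i\lambda u|u|^3$ and treat the quartic piece by a separate, symmetric application of Proposition \ref{kpv} with $s_1=s_2=\frac14$, $q=r=4$; the paper avoids this case distinction by keeping $\partial_t u$ whole as an $L^2$ factor. Third, you take Hölder exponents $p$ slightly below $2$ with $r$ large, where the paper simply fixes $p=\frac43$, $q=2$, $r=4$.

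One step is wrong as written, though easily repaired: the absorption $U^{\varepsilon}\|u\|_{H^1}^{2-\varepsilon}\le\|u\|_{H^1}^2$ fails whenever $U>\|u(t)\|_{H^1}$, which is possible because $U$ is a supremum over the whole lifespan while $\|u(t)\|_{H^1}$ is evaluated at the given time; the pointwise inequality $\|u(t)\|_{H^{1/2}}\le\|u(t)\|_{H^1}$ does not survive once you have prematurely replaced $\|u(t)\|_{H^{1/2}}$ by $U$. The repair is to run your interpolations with $\|u(t)\|_{H^{1/2}}$ instead of $U$: the Hölder-dual exponent forces the complementary Sobolev index $\sigma=1/r$ on the pairing factor, so the two factors $\|u\|_{H^{1/2}}^{2/r}\|u\|_{H^1}^{1-2/r}$ and $\|u\|_{H^{1/2}}^{1-2/r}\|u\|_{H^1}^{2/r}$ multiply to exactly $\|u\|_{H^{1/2}}\|u\|_{H^1}$, i.e. there is no loss at all; only at the very end should at most two $H^{1/2}$-factors be converted into $\|u\|_{H^1}$ and all remaining ones into $U$. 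This is precisely how the paper's bookkeeping closes (for instance $\|u\|_{H^1}\|u\|_{H^{1/2}}^7\ln(2+\|u\|_{H^1})\le U^6\|u\|_{H^1}^2\ln(2+\|u\|_{H^1})$), and with the fixed admissible exponents $p=\frac43$, $q=2$, $r=4$ the $\varepsilon$-issue never arises in the first place.
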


\begin{proof} It follows by combining the estimates below with
Lemma \ref{lemsem}. More precisely we shall prove that
all the term on the r.h.s. in \eqref{energymod}
can be estimated by $(1+U)^6 \|u\|_{H^1}^2\ln (2+ \|u\|_{H^1})$.
First notice that $$|{\mathcal Im} (|D_x| u, u|u|^{6})|
\lesssim \|u\|_{H^1} \|u\|_{L^{14}}^7\lesssim \|u\|_{H^1}^2 
\|u\|_{H^\frac 12}^6\lesssim \|u\|_{H^1}^2 U^6.$$
On the other hand
$$|(
||D_x|^\frac 12  u|^2, \partial_t (|u|^{3}))|\lesssim 
\||D_x|^\frac 12  u\|_{L^4}^2
\|\partial_t u\|_{L^2}\|u\|_{L^\infty}^2,
$$
that by \eqref{improvedsemirel} and the following Gagliardo-Nirenberg inequality
\begin{equation}\label{GN}\||D_x|^\frac 12 u\|_{L^4}^2\lesssim \||D_x| u\|_{L^2}  \||D_x|^\frac 12 u\|_{L^2},\end{equation}
implies
$$...\lesssim \|u\|_{H^1} 
\|u\|_{H^{\frac 12}}^{3} \|\partial_t u\|_{L^2}
\ln (2+ \|u\|_{H^1}) + \|u\|_{H^1} 
\|u\|_{H^{\frac 12}} \|\partial_t u\|_{L^2}.$$
By looking at the equation solved by $u$
$$...\lesssim   \|u\|_{H^1} 
\|u\|_{H^{\frac 12}}^{3} (\|u\|_{H^1} + \|u\|_{L^8}^4)
\ln (2+ \|u\|_{H^1}) + \|u\|_{H^1} 
\|u\|_{H^{\frac 12}} (\|u\|_{H^1} + \|u\|_{L^8}^4).$$
Next notice that if we develop by the classical Leibniz rule the 
derivative with respect to the time variable and we apply twice 
 Proposition \ref{kpv} (where 
$s=s_1=\frac 12, s_2=0$, $p=\frac 43,q=2, r=4$) we get:
$$|(|D_x|^\frac 12  \partial_t(|u|^2) - \partial_t(\bar u
|D_x|^\frac 12  u)  - \partial_t(u |D_x|^\frac 12  \bar u)  , |D_x|^\frac 12  (|u|^{3}))|
$$
$$\lesssim  \|\partial_t u\|_{L^2} \||D_x|^\frac 12  u \|_{L^4} \||D_x|^\frac 12  (|u|^{3})\|_{L^4}
\lesssim \|\partial_t u\|_{L^2}  \| u\|_{H^1}^\frac 12\| u\|_{H^\frac 12}^\frac 12
\| |u|^{3}\|_{H^1}^\frac 12\| |u|^{3}\|_{H^\frac 12}^\frac 12$$
$$\lesssim \|\partial_t u\|_{L^2}  \| u\|_{H^\frac 12}
\| u\|_{H^1}\| u\|_{L^\infty}^2.$$
Notice that we have used \eqref{GN} and the property 
\begin{equation}\label{alg}\|v\cdot w\|_{H^s}
\lesssim \|v\|_{H^s}\|w\|_{L^\infty} +\|w\|_{H^s}\|v\|_{L^\infty}.
\end{equation}
 We conclude by using
\eqref{improvedsemirel}
and the equation solved by $u$.\\
Next we use again Proposition \ref{kpv} (where 
$s=s_1=\frac 12, s_2=0$, $p=\frac 43,q=2, r=4$),
$$|(
|D_x|^\frac 12  u, |D_x|^\frac 12 (u\partial_t (|u|^{3}))- |D_x|^\frac 12 u\partial_t (|u|^{3})
-u |D_x|^\frac 12 \partial_t (|u|^{3}))|$$
$$\lesssim \||D_x|^\frac 12  u\|_{L^4}^2
\|\partial_t (|u|^{3})\|_{L^2} \lesssim 
\|u\|_{H^1} \|u\|_{H^\frac 12} \|\partial_t u\|_{L^2} \|u\|_{L^\infty}^2,
$$ 
where we used \eqref{GN}.
We conclude by using \eqref{improvedsemirel} and the equation solved by $u$.\\
By H\"older inequality we get
$$|(||D_x|^\frac 12  (|u|^2)|^2,
\partial_t |u|)|\lesssim \||D_x|^\frac 12  (|u|^2)|\|_{L^4}^2 \|\partial_t u\|_{L^2} $$
$$\lesssim \|u^2\|_{H^\frac 12}  \|u^2\|_{H^1}  \|\partial_t u\|_{L^2}
\lesssim \|u\|_{H^\frac 12}  \|u\|_{H^1}\|u\|_{L^\infty}^2  \|\partial_t u\|_{L^2},
$$
where we have used \eqref{GN} and \eqref{alg}. We conclude as above. \\
Next we have the estimate
\begin{align}\nonumber|( |D_x|^\frac 12  (|u|^2) ,  |D_x|^\frac 12 |u| \partial_t (|u|^{2}))|&
\lesssim \| |D_x|^\frac 12  (|u|^2)\|_{L^4}\||D_x|^\frac 12 |u|\|_{L^4} \|\partial_t u\|_{L^2}
\|u\|_{L^\infty}\\\label{util} &\lesssim \|u^2\|_{H^1}^\frac 12 
\|u^2\|_{H^\frac 12}^\frac 12 \|u\|_{H^1}^\frac 12 
\|u\|_{H^\frac 12}^\frac 12 \|\partial_t u\|_{L^2}
\|u\|_{L^\infty},
\end{align}
where we used \eqref{GN}. By \eqref{alg} 
we get
$$...\lesssim \|u\|_{H^1}\|u\|_{H^\frac 12} \|u\|_{L^\infty}^2\|\partial_t u\|_{L^2},$$ 
and we conclude by using the equation solved by $u$ 
in conjunction with \eqref{improvedsemirel}.\\
Finally by Proposition \ref{kpv} and the H\"older inequality
we get the following estimate:
\begin{align*}& |( |D_x|^\frac 12  (|u|^2) ,  |D_x|^\frac 12 (\partial_t (|u|^{2})|u|)
-|u||D_x|^\frac 12 \partial_t (|u|^{2}) -\partial_t (|u|^{2})|D_x|^\frac 12  |u|)|
\\\nonumber & \lesssim \|( |D_x|^\frac 12  (|u|^2)\|_{L^4} \|\partial_t (|u|^{2})\|_{L^2}
\||D_x|^\frac 12 u\|_{L^4},
\end{align*}
and by \eqref{GN}
$$...\lesssim \|u^2\|_{H^1}^\frac 12 \|u^2\|_{H^\frac 12}^\frac 12\|u\|_{H^1}^\frac 12 \|u\|_{H^\frac 12}^\frac 12 \|\partial_t u\|_{L^2} \|u\|_{L^\infty},$$
which is precisely the term in \eqref{util}, hence we can conclude as above.

\end{proof}
 
\noindent {\em Proof of Theorem \ref{nlssemirel}}
 It is similar to the proof of Theorem \ref{nls}, provided that we use Lemma
 \ref{lemsemir} and we show that 
 $$|{\mathcal F}(u)+\|u\|_{L^2}^2 - \|u\|_{H^1}^2|\lesssim 
 C(U) (1+\|u\|_{H^1}) \ln^\frac 32 (2+ \|u\|_{H^1}).$$
This last fact follows from the following computations.
First notice that
$$|(|D_x| u, u|u|^{3})|\lesssim \||D_x|u\|_{L^2}\|u\|_{L^8}^4
\lesssim \|u\|_{H^1} \|u\|_{H^\frac 12}^4.
$$
Moreover we have 
$$
|(||D_x|^\frac 12  (|u|^2)|^2, |u|)|\lesssim
\||D_x|^\frac 12  (|u|^2)\|_{L^4}^2 \|u\|_{L^2}
$$$$\lesssim \|u^2\|_{H^\frac 12} \|u^2\|_{H^1} \|u\|_{L^2}
\lesssim \|u\|_{L^\infty}^2 \|u\|_{H^1}\|u\|_{H^\frac 12}^2,$$
where we used \eqref{GN} and \eqref{alg}.
We conclude by \eqref{improvedsemirel}.
Finally notice that
$$
|(|D_x|^\frac 12  |u|^2 -\bar u
|D_x|^\frac 12  u  - u |D_x|^\frac 12  \bar u  , |D_x|^\frac 12 (|u|^{3}))|
$$$$\lesssim \||D_x|^\frac 12  |u|^2\|_{L^2}  \||D_x|^\frac 12 (|u|^{3})\|_{L^2}
+ \|u\|_{L^2} \||D_x|^\frac 12  |u|\|_{L^4}  \||D_x|^\frac 12 (|u|^{3})\|_{L^4}
$$ 
and hence by \eqref{GN}
$$...\lesssim \|u^2\|_{H^\frac 12} 
\|u^3\|_{H^\frac 12} +
\|u\|_{L^2} \|u\|_{H^\frac 12}^\frac 12 \|u\|_{H^1}^\frac 12 \|u^3\|_{H^\frac 12}^\frac 12
\|u^3\|_{H^1}^\frac 12
$$$$\lesssim \|u\|_{H^\frac 12}^2 
\|u\|_{L^\infty}^3 + \|u\|_{L^2}  \|u\|_{H^1} \|u\|_{H^\frac 12}
\|u\|_{L^\infty}^2,
$$
where we used \eqref{alg}.
We conclude again by \eqref{improvedsemirel}.
 
\hfill$\Box$

\section{Proof of Corollary \ref{gwpHW}}

The case $\lambda=1$ follows by combining
the conservation of the energy $E_{HW,+}$ (which is positive definite)
with Theorem \ref{nlssemirel}.\\
Concerning the case $\lambda=-1$ it is sufficient to show that $\|u(t)\|_{\dot H^\frac 12}$ cannot blow--up in finite time
under the assumptions of Corollary \ref{gwpHW}.\\
Notice that by combining the conservation of the mass and the energy,
with the assumption
$E_{HW,-} (R) \|R\|_{L^2}^4 > 
E_{HW,-}(\varphi) \|\varphi \|_{L^2}^4$,
we get 
\begin{align}E_{HW,-} (R) \|R\|_{L^2}^4 >
E_{HW,-}(u(t)) \|u(t)\|_{L^2}^4 \\\nonumber= \frac 12 \||D_x|^\frac 12 u(t)\|_{L^2}^2\|u(t)\|_{L^2}^4
- \frac 15 \|u(t)\|_{L^5}^5 \|u(t)\|_{L^2}^4.\end{align}
By the following Gagliardo--Nirenberg inequality
\begin{equation}\label{GNdisp} 
\|g\|_{L^5(\R)} \leq C_{GN} \||D_x|^\frac 12 g\|_{L^2(\R)}^\frac 35\|g\|_{L^2(\R)}^\frac 25
\end{equation}
we get
$$...\geq \frac 12 \big ( \||D_x|^\frac 12 u(t)\|_{L^2}\|u(t)\|_{L^2}^2
\big )^2 - \frac 15 C_{GN}^5  \big ( \||D_x|^\frac 12 u(t)\|_{L^2}\|u(t)\|_{L^2}^2
\big )^3.
$$
Hence $\||D_x|^\frac 12 u(t)\|_{L^2}\|u(t)\|_{L^2}^2$ belongs to the sublevel
$${\mathcal A}=\{x\in \R^+| f(x)<E_{HW,-} (R) \|R\|_{L^2}^4\},$$
where $f(x)=\frac 12 x^2 - \frac 15 C_{GN}^5 x^3$. 
Next we denote by $x_{max}>0$ the unique point where the maximum of $f$ is achieved
on $(0, \infty)$. 
We claim that
\begin{equation}\label{claim}x_{max}= \||D_x|^\frac 12 R\|_{L^2}\|R\|_{L^2}^2
\hbox{ and }
f(x_{max})= E_{HW,-} (R) \|R\|_{L^2}^4.\end{equation}
If this is the case then we get
$${\mathcal A}= {\mathcal A}_1\cup {\mathcal A}_2,$$
where
$${\mathcal A}_1=(0,  \||D_x|^\frac 12 R\|_{L^2}\|R\|_{L^2}^2)
\hbox{ and } {\mathcal A}_2= (\||D_x|^\frac 12 R\|_{L^2}\|R\|_{L^2}^2, \infty),
$$
and we conclude by a continuity argument.\\
First notice that by the analysis of $f'$ we get
$$x_{max}= \frac 5{3 C_{GN}^5}$$
and also since $R$ is an optimizer for 
\eqref{GNdisp}, then 
$$E_{HW,-} (R) \|R\|_{L^2}^4= \frac 12 \big ( \||D_x|^\frac 12 R\|_{L^2}\|R\|_{L^2}^2
\big )^2 - \frac 15 C_{GN}^5  \big ( \||D_x|^\frac 12 R\|_{L^2}\|R\|_{L^2}^2
\big )^3.$$
Hence \eqref{claim} follows provided that we prove
\begin{equation}\label{last}
\frac 5{3 C_{GN}^5}= \||D_x|^\frac 12 R\|_{L^2}\|R\|_{L^2}^2.\end{equation}
To prove this fact notice that
since $R$ is an optimizer
for \eqref{GNdisp} we get
$$\frac d{dt} ( \int |R+t\varphi|^5 dx - C_{GN}^5 \||D_x|^\frac 12 (R+t\varphi)\|_{L^2}^3
\|R+t\varphi\|_{L^2}^2)
_{t=0}=0, \hbox{ } \forall \varphi\in H^\frac 12$$
and hence by direct computations
it implies 
$$-(3 C_{GN}^5 \|R\|_{L^2}^2 \||D_x|^\frac 12R\|_{L^2}) |D_x| R - (2 C_{GN}^5 
\||D_x|^\frac 12R\|_{L^2}^3) R + 5R^4=0.$$
Since $R$ solves \eqref{normal}
we deduce that
\begin{equation}\label{34}3 C_{GN}^5 \|R\|_{L^2}^2 \||D_x|^\frac 12R\|_{L^2}=
2 C_{GN}^5 
\||D_x|^\frac 12R\|_{L^2}^3=5\end{equation}
and hence we get \eqref{last}.
Notice that  \eqref{34} follows by the fact
that $R$ cannot be a solution
to $|D_x| R+ a R=bR^4$ unless $a=b=1$.
In fact if it not the case then, since $R$ solves \eqref{normal}, we would get 
$(b-1)R^4=(a-1)R$ that implies $R$ is a constant.

\end{document}